\documentclass[11pt, a4paper]{article}
\usepackage{float}
\usepackage[T1]{fontenc}
\usepackage{newtxtext}       % Times text font
\usepackage{newtxmath}       % Times math font
\usepackage[final]{microtype} % Professional kerning

\usepackage[left=1in, right=1in, top=1in, bottom=1in]{geometry}
\usepackage{authblk}
\makeatletter
\renewcommand{\maketitle}{
    \begin{center}
        {\LARGE \bfseries \@title \par}
        \vspace{1.0em}
        {\large \@author \par}
        \vspace{0.5em}
        {\itshape \@date}
    \end{center}
    \vspace{1.0em}
}
\makeatother

\usepackage{abstract}
\usepackage[authoryear, round]{natbib}
\usepackage{amsmath,amsthm,mathtools,bm}
\usepackage{aliascnt}

\usepackage[colorlinks=true, allcolors=blue]{hyperref}
\usepackage{xurl}
\usepackage{placeins}
\usepackage[capitalize,nameinlink]{cleveref}
\usepackage{siunitx}
\usepackage{booktabs}
\usepackage{graphicx}
\usepackage{listings}
\usepackage{algorithm}
\usepackage{algpseudocode}
\usepackage{orcidlink}

\newif\ifblind
\ifdefined\BLINDVERSION\blindtrue\else\blindfalse\fi

\theoremstyle{plain}
\newtheorem{theorem}{Theorem}

\newaliascnt{lemma}{theorem}

\aliascntresetthe{lemma}

\newaliascnt{proposition}{theorem}
\newtheorem{proposition}[proposition]{Proposition}
\aliascntresetthe{proposition}

\newaliascnt{corollary}{theorem}
\newtheorem{corollary}[corollary]{Corollary}
\aliascntresetthe{corollary}

\theoremstyle{definition}
\newaliascnt{definition}{theorem}
\newtheorem{definition}[definition]{Definition}
\aliascntresetthe{definition}

\theoremstyle{remark}
\newaliascnt{remark}{theorem}
\newtheorem{remark}[remark]{Remark}
\aliascntresetthe{remark}

\crefname{theorem}{Theorem}{Theorems}
\Crefname{theorem}{Theorem}{Theorems}
\crefname{lemma}{Lemma}{Lemmas}
\Crefname{lemma}{Lemma}{Lemmas}
\crefname{proposition}{Proposition}{Propositions}
\Crefname{proposition}{Proposition}{Propositions}
\crefname{corollary}{Corollary}{Corollaries}
\Crefname{corollary}{Corollary}{Corollaries}
\crefname{definition}{Definition}{Definitions}
\Crefname{definition}{Definition}{Definitions}
\crefname{remark}{Remark}{Remarks}
\Crefname{remark}{Remark}{Remarks}

\newcommand{\R}{\mathbb{R}}

\newcommand{\keywords}[1]{\vspace{1em}\noindent\textbf{Keywords:} #1}

\title{A Certifying MCKP Framework for $\Gamma$-Robust Discrete Pricing}
\ifblind
\author{Anonymous Author(s)}
\affil{}
\else
\author{Zi Yuan Eric Shao}
\affil{Department of Mathematics, ETH Zürich, \texttt{ershao@student.ethz.ch}}
\fi
\date{August 2026}

\begin{document}
\maketitle

\begin{abstract}
We study finite-menu portfolio pricing under a ratio margin requirement, price-admissibility constraints, and integer-budget demand uncertainty.  The pricing model reduces exactly to a multiple-choice knapsack problem (MCKP), while the coupled robust constraint is equivalent to a finite family of fixed-threshold MCKPs indexed by zero and every original option deviation.  For each threshold, classical upper-hull geometry yields an LP bound and a feasible one-item rounding rule whose loss is bounded by a single adjacent hull-value jump; under bounded jumps and linear objective growth, the corresponding relative bound is $O(1/n)$.  An exact-arithmetic branch-and-bound construction provides either a global optimum or a valid anytime gap over the complete threshold family, and direct evaluation of the original sorted-$\Gamma$ protection term checks every returned pricing decision.  The contribution is an end-to-end, pricing-specific certifying framework that connects these ingredients, states their boundary conditions, and implements the full original breakpoint family without tolerance clustering.
\end{abstract}

\keywords{Discrete Optimization; Robust Optimization; Pricing; Knapsack Problem}

\section{Introduction and Reduction}\label{sec:intro}
We consider a pricing optimization problem where a decision maker must set prices for a collection of items in order to maximize overall value (e.g., profit or revenue), subject to two key constraints. First, the portfolio must achieve at least a prescribed margin level, so that overall profitability remains acceptable. Second, each individual price must stay sufficiently close to a given reference or ``fair'' price, reflecting regulatory, actuarial, or market considerations. Problems of this type arise repeatedly in practice, for instance in insurance tariff setting, retail and e-commerce pricing, or energy and telecom tariffs, whenever large price menus must be adjusted under margin and fairness constraints.

Two modeling choices deserve brief justification. We work with \emph{discrete} price menus because many applied settings (e.g., insurance tariffs, retail shelf prices with psychological endings, and standardized energy contracts) restrict the decision maker to a finite set of admissible price points, and continuous relaxations can therefore produce infeasible prices. We also impose the margin requirement as a \emph{ratio} constraint $N(\bm{x})/D(\bm{x}) \geq \Delta$ rather than an additive floor $N(\bm{x}) - D(\bm{x}) \geq b$. The ratio form is scale-invariant and matches standard profitability metrics such as loss ratio and gross-margin percentage.
\paragraph{Contributions.}
Our contributions are fourfold.
\begin{enumerate}
\item[(i)] We formulate a discrete portfolio pricing problem with ratio-based margin and fairness constraints and show that it reduces \emph{exactly} to a Multiple-Choice Knapsack Problem (MCKP). While the MCKP is classical, the reduction from constrained pricing with ratio margins, fairness bands, and weighted demand provides a direct bridge between pricing practice and combinatorial optimization theory.

\item[(ii)] We specialize the Bertsimas--Sim scalar-threshold representation to menu-dependent pricing deviations and prove that the resulting robust pricing problem is the maximum over a complete family of fixed-threshold MCKPs.  The candidate set contains zero and every original admissible option deviation (\Cref{thm:parametric,prop:theta_candidates_discrete_clean}).  The scalar representation itself is classical; the contribution here is the pricing specialization, the exact MCKP transformation of every member of the family, and an implementation that does not tolerance-cluster the original thresholds.

\item[(iii)] We turn the classical MCKP upper-hull relaxation into an explicit pricing certificate: round-down is robust feasible at the selected threshold and loses at most one adjacent hull-value jump (\Cref{prop:rg_additive_theta,prop:igap_additive_theta_clean}).  The relative gap is $O(1/n)$ under the uniform bounded-jump and linear-growth assumptions stated in \Cref{cor:igap_relative_theta_clean}.

\item[(iv)] We give a complete full-breakpoint branch-and-bound construction with valid global-gap accounting and a guarded parametric sweep (\Cref{sec:exact}).  The exactness theorems are exact-arithmetic statements.  The binary64 implementation preserves all distinct input thresholds, keeps below-hull integer options, and separately reports direct feasibility checks and numerical tolerances.
\end{enumerate}
The LP relaxation analysis in \Cref{sec:structure} consolidates known MCKP properties \citep{Dyer1984,Pisinger1995}; finite deviation enumeration is also established in the broader budgeted-robust literature \citep{BertsimasSim2004,Buesing2023}.  Accordingly, we do not claim either ingredient in isolation as new.  The distinct contribution is their pricing-specific composition, the one-item certificate stated in the transformed pricing quantities, and a reproducible full-family implementation with exact-search and failure-accounting safeguards.

Separate subsequent work takes the family established here as its starting point and studies simultaneous interval bounds for the complete threshold family.

\subsection{Problem Formulation}
\noindent Mathematically we have the following discrete problem. For each class $i\in\{1,\dots,n\}$, we choose one price level $x_i \in \mathcal X_i=\{x_{i,1},\dots,x_{i,m}\}$. We are given parameters for each class $i$: reference price $a_i>0$, strictly positive exposure $\omega_i>0$, strictly positive nominal demand $\hat{g}_i:\mathcal{X}_i\to \R_{>0}$ (monotone decreasing on the admissible range; e.g.\ piecewise-linear or iso-elastic), relative tolerance $\sigma_i$, and a margin target parameter $\Delta>0$.
We assume the demand functions $\hat{g}_i$ depend only on the own-price $x_i$ and not on other items' prices, which is appropriate when cross-product substitution is negligible (see \Cref{sec:conclusion} for extensions).

\noindent \textbf{Define}:
\[
N(\bm{x}) \coloneqq \sum_{i=1}^n \omega_i\,x_i\,\hat{g}_i(x_i),\qquad
D(\bm{x}) \coloneqq \sum_{i=1}^n \omega_i\,a_i\,\hat{g}_i(x_i)>0,\qquad
S(\bm{x}) \coloneqq N(\bm{x})-\Delta \cdot D(\bm{x})\ .
\]
\textbf{Optimization Problem}:
\begin{align}
\max_{\bm{x}}\quad & N(\bm{x})=\sum_{i=1}^n \omega_i x_i \hat{g}_i(x_i),\\
\text{s.\,t.}\quad &
\frac{N(\bm{x})}{D(\bm{x})}\ \ge\ \Delta\ \ \Leftrightarrow\ \ S(\bm{x})\ge0,\\
&
\frac{|x_i - a_i|}{a_i} \le \sigma_i \iff x_{i}\in [(1-\sigma_{i} )a_{i},\  (1+\sigma_{i} )a_{i}] \quad (\forall i).
\end{align}
The second constraint can be easily mitigated by defining the admissible menu
\[
I_{i}:=\left[ \left( 1-\sigma_{i} \right) a_{i},\  \left( 1+\sigma_{i} \right) a_{i} \right], \qquad
\mathcal X_i^{\mathrm{ad}} := \mathcal X_i \cap I_i \neq \emptyset,\qquad i=1,\dots,n,
\]
and henceforth restrict all decisions to $x_i\in \mathcal X_i^{\mathrm{ad}}$.

\subsection{Reduction to MCKP}\label{par:MCKP}
For a formal introduction to the classical 0--1 Knapsack problem and the standard MCKP definition, we refer to \Cref{app:background}.
Introduce binary selection variables $z_{ij}\in\{0,1\}$ indicating the choice $x_i=x_{i,j}$. For each option $x_{i,j}\in\mathcal X_i$, define
\begin{align}
    v_{ij} & \coloneqq \omega_i\,x_{i,j}\,\hat{g}_i(x_{i,j}), \\
    d_{ij} & \coloneqq \omega_i\,a_i\,\hat{g}_i(x_{i,j}), \\
    s_{ij} & \coloneqq v_{ij}-\Delta\,d_{ij}.
\end{align}
Let $J_i^{\mathrm{ad}}:=\{j\in\{1,\dots,m\}: x_{i,j}\in \mathcal X_i^{\mathrm{ad}}\}$ denote the admissible indices. Then
\[
N(\bm x)=\sum_{i=1}^n\sum_{j\in J_i^{\mathrm{ad}}} v_{ij}z_{ij},
\qquad
S(\bm x)=\sum_{i=1}^n\sum_{j\in J_i^{\mathrm{ad}}} s_{ij}z_{ij},
\qquad
\sum_{j\in J_i^{\mathrm{ad}}} z_{ij}=1 \ \ (\forall i).
\]
To obtain a single knapsack constraint with nonnegative costs, apply a baseline--slack transformation. For each $i$ choose a baseline option
\begin{align*}
    j^*(i) & \in \operatorname*{arg\,max}_{j\in J_i^{\mathrm{ad}}} s_{ij}, \\
    S_{\text{max}} & \coloneqq \sum_{i=1}^n s_{i\,j^*(i)}, \\
    c_{ij} & \coloneqq s_{i\,j^*(i)}-s_{ij}\ \ (\ge 0)\qquad (j\in J_i^{\mathrm{ad}}).
\end{align*}
Then
\[
\sum_{i=1}^n\sum_{j\in J_i^{\mathrm{ad}}} s_{ij}z_{ij}
\;=\;
\sum_{i=1}^n s_{i\,j^*(i)}
\;-\;
\sum_{i=1}^n\sum_{j\in J_i^{\mathrm{ad}}} c_{ij}z_{ij}
\;=\;
S_{\text{max}}-\sum_{i=1}^n\sum_{j\in J_i^{\mathrm{ad}}} c_{ij}z_{ij}.
\]
Hence the ratio constraint $S(\bm{x})\ge 0$ is equivalent to the knapsack inequality
\[
\sum_{i=1}^n\sum_{j\in J_i^{\mathrm{ad}}} c_{ij}\,z_{ij} \;\le\; S_{\text{max}}.
\]
The pricing problem is therefore exactly the multiple-choice knapsack problem (MCKP)
\begin{align}
    \max \quad & \sum_{i=1}^n \sum_{j\in J_i^{\mathrm{ad}}} v_{ij}\,z_{ij} \\
    \text{s.t.}\quad & \sum_{i=1}^n \sum_{j\in J_i^{\mathrm{ad}}} c_{ij}\,z_{ij} \;\le\; S_{\text{max}}, \\
    & \sum_{j\in J_i^{\mathrm{ad}}} z_{ij}=1 \quad (i=1,\dots,n), \\
    & z_{ij}\in\{0,1\}\quad (\forall i,\ j\in J_i^{\mathrm{ad}}).
\end{align}

\section{Related Work}
\label{sec:related_work}

\paragraph{MCKP foundations.}
The Multiple-Choice Knapsack Problem (MCKP) is a classical discrete optimization model in which exactly one option is selected from each of $n$
groups under a knapsack constraint. Early work formulated continuous and discrete variants and developed branch-and-bound, reduction, and
LP-relaxation methods; see, e.g., \citet{Nauss1978,Ibaraki1978,Sinha1979}. A key structural feature is the near-integrality of the LP relaxation:
there exists an optimal LP solution in which all but at most one group choose a single option, while one group (at most) is mixed between
two options \citep{Dyer1984}. This LP structure underpins many exact and heuristic schemes and is closely related to convex-hull and ``core''
ideas used in practical solvers \citep{Pisinger1995,Kellerer2004}.

\paragraph{Pricing and uncertainty.}
Discrete menus and business constraints are central in practical pricing models; for example, \citet{Harsha2019} formulate discrete
omnichannel prices with operational price and demand constraints, while
\ifblind recent work studies constrained logit-based pricing.\else
\citet{ShaoMaiCheng2026} study constrained logit-based pricing.\fi
These models capture substitution and richer business rules than the separable own-price model used here.  Robust pricing has also been
studied under resource uncertainty \citep{Thiele2009} and interval uncertainty in price sensitivity \citep{Hamzeei2022}.  Our question is
different: a finite own-price menu must meet one portfolio ratio-margin requirement when only an integer budget of item-level demand
contributions can deviate.

\paragraph{Budgeted robust discrete optimization.}
Robust knapsack algorithms are well studied in the single-choice setting \citep{Monaci2013}.  \citet{Caserta2019} instead consider a
multiple-choice multidimensional knapsack with ellipsoidal uncertainty and conic/linear reformulations.  For general robust binary
optimization with budget uncertainty, zero plus the original deviations is a known finite candidate family, and modern exact methods branch
over or reduce that family \citep{Buesing2023}.  These results prevent us from presenting scalar threshold enumeration as a new general
robust-optimization theorem.  What is less directly covered by the cited work, and what this paper supplies, is the complete chain for the
specific ratio-margin pricing model: menu-level uncertain contributions, fixed-threshold MCKP geometry, an interpretable one-item loss
certificate, direct sorted-$\Gamma$ validation, and global gap accounting in one public implementation.
% ============================================================
% SECTION: STRUCTURAL ANALYSIS
% ============================================================
\section{Robust Pricing under Demand Uncertainty}\label{sec:robust}

In the formulation presented in \Cref{sec:intro}, the demand response $\hat{g}_i(x_i)$ is treated as a deterministic function of price even though demand estimates are subject to statistical error. To mitigate this risk, we introduce a \textbf{Budgeted Robust} framework in which the true demand $\tilde{g}_i(x_i)$ resides in an uncertainty set $\mathcal U$ and the margin constraint must hold for \textit{all} realizations in that set. We then seek a price vector $\bm{x}$ that maximizes nominal revenue subject to this worst-case feasibility requirement.

\subsection{Uncertainty Model}
We assume the realized demand lies within a symmetric interval around the nominal value:
\begin{equation}
    \tilde{g}_i(x_i) \in [\hat{g}_i(x_i) - \delta_i(x_i), \hat{g}_i(x_i) + \delta_i(x_i)],
\end{equation}
where $\delta_i(x_i) \ge 0$ is a variability parameter (e.g., derived from the standard error of the demand estimation). We assume $\delta_i(x_i)<\hat g_i(x_i)$ on $\mathcal X_i^{\mathrm{ad}}$. Thus realized demand remains strictly positive under uncertainty and, because $\omega_i,a_i>0$, the realized ratio denominator $\widetilde D(\bm x)=\sum_i\omega_i a_i\widetilde g_i(x_i)$ is positive for every admissible price vector and every realization in the uncertainty set.

To avoid overly conservative solutions—such as those arising from box uncertainty where every product realizes its worst-case simultaneously—we adopt the ``Budget of Uncertainty'' approach proposed by \citet{BertsimasSim2004}. We define an integer budget $\Gamma \in \{0,1,\dots,n\}$, representing the maximum number of products that can simultaneously deviate to their worst-case limits.

\begin{definition}[Uncertainty Set]
Let $\xi_i \in [-1, 1]$ be the scaled deviation for item $i$. For a fixed price vector $x$, we define the uncertainty set $\mathcal{U}^\Gamma$ as:
\begin{equation}
    \mathcal{U}^\Gamma(\bm{x}) = \left\{ \tilde{g}(\bm{x}) \in \mathbb{R}^n \;\bigg|\; \tilde{g}_i(x_i) = \hat{g}_i(x_i) + \xi_i \delta_i(x_i), \quad \sum_{i=1}^n |\xi_i| \le \Gamma, \quad |\xi_i| \le 1 \quad \forall i \right\}.
\end{equation}
\end{definition}

\subsection{The Robust Counterpart}
The robust margin constraint requires $\widetilde N(\bm{x})/\widetilde D(\bm{x})\ge\Delta$ for all $\tilde{g} \in \mathcal{U}^\Gamma(\bm{x})$. The strict positivity of $\widetilde D$ established above makes this ratio condition equivalent to the following robust surplus inequality:
\begin{equation}\label{eq:robust_raw}
    \min_{\tilde{g} \in \mathcal{U}^\Gamma(\bm{x})} \sum_{i=1}^n \omega_i \tilde{g}_i(x_i) (x_i - \Delta a_i) \ge 0.
\end{equation}
Let $s_i(x_i)=\omega_{i} \left( x_{i}-\Delta a_{i} \right) \hat{g}_{i} \left( x_{i} \right)$ and $t_i(x_i):=\omega_i(x_i-\Delta a_i)\,\delta_i(x_i).$ The robust constraint can be rewritten as:
\begin{equation}\label{eq:robust_st}
    \sum_{i=1}^n s_i(x_i) + \min_{\xi : \sum |\xi_i| \le \Gamma, |\xi_i| \le 1} \sum_{i=1}^n t_i(x_i) \xi_i \ge 0.
\end{equation}
The minimization term represents the worst-case deviation permitted by the budget. Since the objective is linear in $\xi_i$, the minimum is achieved when $\xi_i$ takes values on the boundary of its domain to oppose the sign of $\omega_i(x_i-\Delta a_i)$. This worst-case behavior leads to the following exact formulation:

\begin{theorem}[Robust Counterpart Formulation]\label{thm:robust}
Let $\Gamma \in \{0,1,\dots,n\}$ be an integer. Define
\[
\beta(\bm{x},\Gamma)
\ :=\
\sum_{k=1}^{\Gamma} \ |t|_{(k)}(\bm{x}),
\]
where $|t|_{(1)}(\bm{x}) \ge \cdots \ge |t|_{(n)}(\bm{x})$ denotes the nonincreasing ordering of the numbers
$\big\{|t_i(x_i)|\big\}_{i=1}^n$. We adopt the convention that $\beta(\bm{x},0)=0$.
Then the robust optimization problem is equivalent to
\begin{align}
    \max_{\bm{x}} \quad & \sum_{i=1}^n \omega_i x_i \hat{g}_i(x_i) \\
    \text{s.t.} \quad
    & \sum_{i=1}^n s_i(x_i) - \beta(\bm{x}, \Gamma) \ge 0, \label{eq:robust_final}\\
    & x_i \in [(1-\sigma_i)a_i,(1+\sigma_i)a_i]\cap \mathcal X_i \qquad (\forall i). \nonumber
\end{align}
\end{theorem}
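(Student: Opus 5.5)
The plan is to show, for each fixed admissible price vector $\bm x$, that the inner minimum in \eqref{eq:robust_st} equals $-\beta(\bm x,\Gamma)$. Then the robust surplus constraint \eqref{eq:robust_raw} coincides pointwise with \eqref{eq:robust_final}. The objective (nominal revenue) and the admissibility constraints are the same in both problems, so equal feasible sets give equivalent problems. The ratio-to-surplus step is already justified in the text: every realization satisfies $\widetilde D(\bm x)>0$ because $\delta_i<\hat g_i$ and $\omega_i,a_i>0$, so multiplying $\widetilde N/\widetilde D\ge\Delta$ by $\widetilde D$ preserves the inequality. Substituting $\tilde g_i=\hat g_i+\xi_i\delta_i$ and collecting the $\xi$-free terms into $\sum_i s_i(x_i)$ yields \eqref{eq:robust_st}.

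Write $t_i=t_i(x_i)$ and consider the linear program
\[
\min\Big\{\textstyle\sum_i t_i\xi_i : \sum_i|\xi_i|\le\Gamma,\ |\xi_i|\le 1\Big\}.
\]
\emph{Reduction to a sign-free problem.} For any feasible $\xi$, we have $t_i\xi_i\ge -|t_i||\xi_i|$. Setting $u_i=|\xi_i|$ therefore gives the lower bound $\sum_i t_i\xi_i\ge-\max\{\sum_i|t_i|u_i: 0\le u_i\le1,\ \sum_i u_i\le\Gamma\}$. This bound is attained by choosing $\xi_i=-\operatorname{sgn}(t_i)u_i$, with any sign when $t_i=0$, so the minimum equals minus that maximum. \emph{Fractional knapsack with unit weights.} Since $\Gamma$ is an integer with $0\le\Gamma\le n$, I will show that the maximum is $\sum_{k=1}^{\Gamma}|t|_{(k)}$. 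For the upper bound, order indices so that $|t_{\pi(1)}|\ge\dots\ge|t_{\pi(n)}|$. An exchange or Abel-summation argument then applies: for $u$ feasible, $\sum_k |t|_{(k)}u_{\pi(k)}\le\sum_{k\le\Gamma}|t|_{(k)}$. To see this, write the difference as $\sum_{k\le\Gamma}|t|_{(k)}(u_{\pi(k)}-1)+\sum_{k>\Gamma}|t|_{(k)}u_{\pi(k)}$. The first sum is at most $|t|_{(\Gamma)}\sum_{k\le\Gamma}(u_{\pi(k)}-1)$, because each term $u_{\pi(k)}-1\le0$ and $|t|_{(k)}\ge|t|_{(\Gamma)}$. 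The second sum is at most $|t|_{(\Gamma)}\sum_{k>\Gamma}u_{\pi(k)}$. Adding these gives at most $|t|_{(\Gamma)}(\sum_k u_k-\Gamma)\le0$. For attainment, set $u_{\pi(k)}=1$ for $k\le\Gamma$ and $0$ otherwise. The case $\Gamma=0$ forces $\xi=0$ and matches the convention $\beta(\bm x,0)=0$.

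The only delicate point is the integrality of $\Gamma$. It ensures that the optimal $u$ is a $0/1$ vector with no fractional $(\Gamma+1)$-st term, so that the minimum is exactly the sorted sum $\beta$. Ties among the $|t_i|$, and indices with $t_i=0$, do not affect the value, so $\beta$ is well defined regardless of how the ordering breaks ties. Once these two steps are done, \eqref{eq:robust_st} becomes $\sum_i s_i(x_i)-\beta(\bm x,\Gamma)\ge0$ for each admissible $\bm x$, which proves the theorem.
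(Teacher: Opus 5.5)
Your proof is correct and follows the paper's route: fix $\bm x$, use the sign choice $\xi_i=-\operatorname{sgn}(t_i)u_i$ to reduce the inner minimum to $-\max\{\sum_i|t_i|u_i:\ \sum_i u_i\le\Gamma,\ 0\le u_i\le 1\}$, evaluate that maximum as the sum of the $\Gamma$ largest $|t_i|$, and substitute into the robust surplus constraint. The only difference is how you get that maximum: you prove it directly with a pivot inequality around $|t|_{(\Gamma)}$, while the paper cites a $0/1$ optimal extreme point for integer $\Gamma$, so your version is slightly more self-contained.
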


\begin{proof}
Fix $\bm{x}$ and consider the inner minimization in \eqref{eq:robust_st}:
\[
V(\bm{x})
=
\min_{\substack{\sum_{i=1}^n|\xi_i|\le \Gamma\\ |\xi_i|\le 1}}
\ \sum_{i=1}^n t_i(x_i)\,\xi_i .
\]
By feasibility and symmetry of the constraints, for any feasible $\bm{\xi}$ we may flip signs coordinatewise so that
$\operatorname{sgn}(\xi_i)=-\operatorname{sgn}(t_i(x_i))$ whenever $\xi_i\neq 0$, without violating
$\sum_i|\xi_i|\le \Gamma$ or $|\xi_i|\le 1$. This is valid because $|\xi_i|$ is invariant under sign changes, so neither the budget constraint
nor the box constraint is affected. Hence
\[
V(\bm{x})
=
-\max_{\substack{\sum_{i=1}^n y_i \le \Gamma\\ 0\le y_i \le 1}}
\ \sum_{i=1}^n |t_i(x_i)|\,y_i ,
\qquad\text{where } y_i:=|\xi_i|.
\]
The maximization problem above is a linear program. Since $\Gamma$ is an integer, it admits an optimal extreme point solution
with $y_i\in\{0,1\}$ for all $i$ and $\sum_i y_i\le \Gamma$.
Therefore an optimal solution sets $y_i=1$ for the $\Gamma$ indices with largest coefficients $|t_i(x_i)|$ and $y_i=0$ otherwise, giving
\[
V(\bm{x})
=
-\sum_{k=1}^{\Gamma} |t|_{(k)}(\bm{x})
=
-\beta(\bm{x},\Gamma).
\]
Substituting this into \eqref{eq:robust_st} yields \eqref{eq:robust_final}.
\end{proof}

\section{Structural Analysis and Greedy Optimality}\label{sec:structure}

The $\Gamma$-budget penalty $\beta(\bm{x},\Gamma)$ couples items and breaks separability, which is a main source of computational difficulty
in robust discrete pricing. Even without robustness, the reduced problem is NP-hard because it is an MCKP. The key structural observation is
that the LP relaxation remains efficiently solvable: each item's discrete menu induces an upper hull in the cost--value plane, and the global
LP reduces to greedy filling over hull segments. The sequel derives this hull-greedy LP structure and connects it to the robust model through
the parametric decomposition used in \Cref{sec:algorithm}.

\subsection{Upper-hull geometry of the LP relaxation}\label{subsec:hull_lp}
Fix an item $i$ and consider its admissible discrete options $x_{i,j}\in\mathcal X_i^{\mathrm{ad}}$ from \Cref{sec:intro}.
Each option induces a point $(c_{ij},v_{ij})$ in the cost--value plane, and we write
\[
\mathcal P_i \;:=\;\{(c_{ij},v_{ij}) : x_{i,j}\in\mathcal X_i^{\mathrm{ad}}\}\subset\mathbb R^2.
\]
In the LP relaxation, the constraints $\sum_j z_{ij}=1$ and $z_{ij}\ge 0$ allow fractional mixing across options; equivalently, item $i$
may realize any aggregate point $(c_i,v_i)\in\operatorname{conv}(\mathcal P_i)$. Since the objective maximizes value for a given resource
usage, only points on the \emph{upper hull} of $\operatorname{conv}(\mathcal P_i)$ can be optimal. This motivates
\emph{upper-hull filtering}: after removing cost--value dominated points, we replace $\mathcal P_i$ by the ordered vertex sequence
$\mathcal H_i$ of the efficient concave upper frontier of $\operatorname{conv}(\mathcal P_i)$, thereby discarding menu points without
changing the LP optimum.

\begin{remark}(Economic intuition)\label{rem:diminishing_returns}
In many pricing portfolios, spending additional margin slack yields progressively smaller revenue gains, suggesting a concave
``efficiency frontier''. The analysis below does not assume such concavity on the raw discrete menus: concavity arises automatically after
passing to the upper hull of $\operatorname{conv}(\mathcal P_i)$.
\end{remark}

Assume the LP is feasible, equivalently $C \ge \sum_i \min_j c_{ij}$ (and in particular $C \ge \sum_i c^H_{i,1}$).
\begin{remark}[Relation to prior work]\label{rem:prior_hull}
The properties collected in \Cref{thm:master_greedy} are individually standard in the MCKP literature; see \citet{Dyer1984}, \citet{Pisinger1995}, and \citet[Chapter~11]{Kellerer2004}. We re-prove them in unified form because our notation differs from these sources and the segment parameterization is used directly in \Cref{sec:algorithm}. Readers familiar with the classical MCKP LP structure may skip ahead to the Robust Constraint Structure subsection without loss of continuity.
\end{remark}
\begin{theorem}[Hull reduction and greedy optimality for the LP relaxation]\label{thm:master_greedy}
Consider the LP relaxation of the Multiple-Choice Knapsack Problem (MCKP):
\begin{align}\label{eq:lp_mckp_master}
\max_{\bm{z}} \quad & \sum_{i=1}^n\sum_{j=1}^m v_{ij}\,z_{ij}\\
\text{s.t.}\quad & \sum_{i=1}^n\sum_{j=1}^m c_{ij}\,z_{ij}\le C,\nonumber\\
& \sum_{j=1}^m z_{ij}=1 \quad (i=1,\dots,n),\nonumber\\
& z_{ij}\ge 0 \quad (\forall i,j).\nonumber
\end{align}
For each item $i$, define the point set $\mathcal P_i:=\{(c_{ij},v_{ij}) : j=1,\dots,m\}\subset\mathbb R^2$.
Remove cost--value dominated points from $\mathcal P_i$.  Let $\mathcal H_i=\{(c^H_{i,1},v^H_{i,1}),\allowbreak\dots,\allowbreak(c^H_{i,p_i},v^H_{i,p_i})\}$ denote the vertices of the resulting efficient concave upper frontier, ordered so that $c^H_{i,1}<\cdots<c^H_{i,p_i}$ and $v^H_{i,1}<\cdots<v^H_{i,p_i}$.
For $k=1,\dots,p_i-1$ define the hull segment lengths and slopes
\[
\Delta c^H_{i,k}:=c^H_{i,k+1}-c^H_{i,k}>0,
\qquad
\rho^H_{i,k}:=\frac{v^H_{i,k+1}-v^H_{i,k}}{c^H_{i,k+1}-c^H_{i,k}}.
\]

Then the following statements hold.

\begin{enumerate}
\item[(i)] (\textbf{Upper-hull sufficiency / two-adjacent-vertices mixing.})
There exists an optimal solution to \eqref{eq:lp_mckp_master} such that for each $i$,
the induced aggregate point $(c_i,v_i):=\big(\sum_j c_{ij}z_{ij},\sum_j v_{ij}z_{ij}\big)$ lies on the upper hull of
$\operatorname{conv}(\mathcal P_i)$, and hence can be represented as a convex combination of at most two \emph{adjacent} hull vertices
$(c^H_{i,k},v^H_{i,k})$ and $(c^H_{i,k+1},v^H_{i,k+1})$ for some $k$.

\item[(ii)] (\textbf{Reduction to continuous knapsack over hull segments.})
The LP \eqref{eq:lp_mckp_master} has the same optimal value as the continuous knapsack problem
\begin{align}\label{eq:cont_knap_master}
\max_{\bm{\ell}} \quad & \sum_{i=1}^n v^H_{i,1}+\sum_{i=1}^n\sum_{k=1}^{p_i-1}\rho^H_{i,k}\,\ell_{i,k}\\
\text{s.t.}\quad & \sum_{i=1}^n\sum_{k=1}^{p_i-1}\ell_{i,k}\ \le\ C-\sum_{i=1}^n c^H_{i,1},\nonumber\\
& 0\le \ell_{i,k}\le \Delta c^H_{i,k}\qquad(\forall i,k),\nonumber
\end{align}
where $\ell_{i,k}$ denotes the amount of capacity allocated to hull segment $k$ of item $i$.  Because slopes on each efficient
concave frontier are nonincreasing, a density-ordered optimum fills an item's segments in prefix order and therefore maps back to a valid
point on that item's frontier.  Arbitrary non-prefix feasible vectors of \eqref{eq:cont_knap_master} need not have such a mapping.

\item[(iii)] (\textbf{Greedy optimality for the LP relaxation.})
Let $\bm{\ell}^\star$ be obtained by filling segment variables $\ell_{i,k}$ in nonincreasing order of densities $\rho^H_{i,k}$,
each up to its upper bound $\Delta c^H_{i,k}$, with at most one final fractional fill to meet the capacity constraint.
Then $\bm{\ell}^\star$ induces an optimal solution to \eqref{eq:cont_knap_master} and hence an optimal solution to the LP relaxation
\eqref{eq:lp_mckp_master}. In particular, there exists an optimal LP solution in which at most one item is mixed fractionally
(i.e.\ across two adjacent hull vertices); all other items select a single hull vertex.

\item[(iv)] (\textbf{Special case: incremental greedy on the raw chain.})
Suppose for every $i$ the raw point chain $\{(c_{i,j},v_{i,j})\}_{j=1}^m$ satisfies
$c_{i,1}<\cdots<c_{i,m}$ and lie on the upper hull of $\operatorname{conv}(\mathcal P_i)$.
Thus they coincide with the hull vertices.
Then the greedy rule in (iii) reduces to the \emph{incremental greedy algorithm} on adjacent increments
$\rho_{i,j}=(v_{i,j}-v_{i,j-1})/(c_{i,j}-c_{i,j-1})$.
\end{enumerate}
\end{theorem}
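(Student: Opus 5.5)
The plan is to pass from $\bm z$ to per-item aggregate points, replace each item's feasible region by its efficient upper frontier, and then read off the LP as a fractional knapsack over hull segments.

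\emph{Step 1: reduction to the aggregate-point problem.} For any feasible $\bm z$, item $i$ contributes the point $(c_i,v_i)=\sum_j z_{ij}(c_{ij},v_{ij})\in\operatorname{conv}(\mathcal P_i)$. Conversely, every point of $\operatorname{conv}(\mathcal P_i)$ arises from some $z_{i\cdot}$ in the simplex. Hence \eqref{eq:lp_mckp_master} is equivalent to
\[
\max\Big\{\textstyle\sum_i v_i : (c_i,v_i)\in\operatorname{conv}(\mathcal P_i),\ \sum_i c_i\le C\Big\}.
\]
Introduce the concave frontier function
\[
f_i(c):=\max\{v:(c',v)\in\operatorname{conv}(\mathcal P_i),\ c'\le c\},
\]
defined for $c\ge c^H_{i,1}$. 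This function is piecewise linear, concave and nondecreasing, with breakpoints exactly at the vertices of $\mathcal H_i$. It is constant beyond $c^H_{i,p_i}$, where $c^H_{i,p_i}$ is the cost of the max-value point.

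Given an optimal solution, replace each $(c_i,v_i)$ by the frontier point attaining $f_i(c_i)$. This weakly raises value and weakly lowers cost, so the modified solution is still optimal and feasible. It lies on an edge, or at a vertex, of the upper frontier, so it is a convex combination of two adjacent vertices. This gives (i). Two details need checking. First, removing dominated points does not change $f_i$, since a dominated point is beaten in both coordinates. Second, the left endpoint is correct: the min-cost point with maximal value among min-cost points is $c^H_{i,1}$, so $\sum_i c^H_{i,1}\le C$ is exactly feasibility.

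\emph{Step 2: segment parameterization, part (ii).} Write $c_i=c^H_{i,1}+\sum_k\ell_{i,k}$ with prefix filling. Concavity, i.e. $\rho^H_{i,1}>\rho^H_{i,2}>\cdots$ (nonincreasing slopes from the hull property), gives
\[
f_i(c_i)=v^H_{i,1}+\max\Big\{\textstyle\sum_k\rho^H_{i,k}\ell_{i,k}: 0\le\ell_{i,k}\le\Delta c^H_{i,k},\ \sum_k\ell_{i,k}=c_i-c^H_{i,1}\Big\}.
\]
The maximum is attained by prefix filling, by a standard exchange argument: moving mass from a later segment to an earlier non-full one never decreases value.

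Summing over $i$ and optimizing over the split of capacity among items shows that the LP optimum equals the optimum of \eqref{eq:cont_knap_master}. One direction maps LP solutions to prefix vectors. The other direction takes an arbitrary feasible $\bm\ell$, exchanges it to a prefix vector within each item without loss, and then maps that back to frontier points. The same exchange is what justifies the caveat in the statement that non-prefix vectors need not correspond to points directly.

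\emph{Step 3: greedy, part (iii).} Problem \eqref{eq:cont_knap_master} is a fractional knapsack with unit weights per unit of $\ell$ and profits $\rho^H_{i,k}$. The classical density-greedy argument applies: any optimal solution can be exchanged toward the greedy one without loss. Since the slopes are nonnegative (values increase along $\mathcal H_i$), filling up to capacity is fine. Greedy in global density order automatically fills each item's segments in prefix order, because within an item the densities strictly decrease. Ties across items are harmless, and within an item no ties occur for strict hull vertices.

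Thus $\bm\ell^\star$ maps back to frontier points. Exactly one segment at most is fractional, so at most one item lies strictly inside an edge, while all others sit at vertices. This yields the one-mixed-item claim.

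\emph{Step 4: part (iv).} If the raw points are already the hull vertices in cost order, then $\mathcal H_i$ is the raw chain and $\rho^H_{i,k}$ coincides with the adjacent increments $\rho_{i,j}$. Part (iii) then specializes to the incremental greedy.

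The main obstacle is Step 2: making rigorous that the continuous knapsack, whose feasible set includes non-prefix vectors, has the same value as the LP. That equality rests entirely on the strict monotonicity of hull slopes together with the within-item exchange. It needs careful handling of the dominated-point removal, so that the frontier really starts at $c^H_{i,1}$ and the slopes are positive.
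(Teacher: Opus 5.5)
Your proposal is correct and follows essentially the same route as the paper: the convex-hull reformulation of each item's simplex, replacement of aggregate points by frontier points for (i), the prefix segment parameterization for (ii), and the fractional-knapsack exchange argument for (iii). Two of your choices are small refinements of the paper's argument. The monotone frontier $f_i(c)=\max\{v:(c',v)\in\operatorname{conv}(\mathcal P_i),\,c'\le c\}$ explicitly covers optimal aggregate points costing more than $c^H_{i,p_i}$, a case the paper's fixed-cost replacement $U_i(c_i)$ leaves implicit. Your within-item exchange for the converse direction of (ii) avoids the paper's appeal to a density-ordered optimum, which is really the argument of (iii).
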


\begin{proof}
Steps 1--3 (convex-hull reformulation, restriction to the upper hull, and segment-length parameterization) follow standard arguments adapted to our notation; we defer them to \Cref{app:thm2_steps} for completeness.

\textbf{Proof of (iii).}
Problem \eqref{eq:cont_knap_master} is a continuous knapsack (fractional) problem with items indexed by pairs $(i,k)$,
value density $\rho^H_{i,k}$ and capacity upper bound $\Delta c^H_{i,k}$.
A standard exchange argument shows that an optimal solution is obtained by filling variables in nonincreasing order of $\rho^H_{i,k}$:
take two variables $(i,k)$ and $(i',k')$ with $\rho^H_{i,k}>\rho^H_{i',k'}$. If $0<\ell_{i',k'}<\Delta c^H_{i',k'}$ while
$\ell_{i,k}<\Delta c^H_{i,k}$, then shifting an infinitesimal amount $\varepsilon$ of capacity from $\ell_{i',k'}$ to $\ell_{i,k}$
increases the objective by $\varepsilon(\rho^H_{i,k}-\rho^H_{i',k'})>0$ while preserving feasibility, contradicting optimality.
Thus, in any optimum, all higher-density variables are saturated before any lower-density variable is positive, implying the greedy fill rule
is optimal and that at most one variable can be fractional at termination. This proves (iii).

\textbf{Proof of (iv).}
If for each $i$ the raw chain coincides with the hull vertices, i.e.\
$\mathcal H_i=\{(c_{i,j},v_{i,j})\}_{j=1}^m$,
then the hull segments are exactly the adjacent increments $(j-1)\to j$ and $\rho^H_{i,k}$ equals $\rho_{i,j}$.
Therefore the greedy rule in (iii) reduces to selecting adjacent upgrades in descending order of $\rho_{i,j}$, which is precisely the
incremental greedy algorithm. \qedhere
\end{proof}

\subsection{Robust Constraint Structure}
\subsubsection{Box Uncertainty (\texorpdfstring{$\Gamma=n$}{Gamma=n}): Separability}
We first address the structural impact of robustness in the limiting case of \textbf{Box Uncertainty}, i.e.\ when the budget equals the number of items ($\Gamma=n$). In this regime, every item may simultaneously deviate to its worst-case limit.

Recall the robust margin constraint in the form \eqref{eq:robust_st}:
\[
\sum_{i=1}^n s_i(x_i) + \min_{\substack{\sum_{i=1}^n|\xi_i|\le \Gamma\\|\xi_i|\le 1}} \ \sum_{i=1}^n t_i(x_i)\,\xi_i \ \ge\ 0,
\qquad
s_i(x)=\omega_i(x-\Delta a_i)\hat g_i(x),\quad
t_i(x)=\omega_i(x-\Delta a_i)\delta_i(x).
\]
For $\Gamma=n$, the budget constraint $\sum_i|\xi_i|\le n$ is redundant given $|\xi_i|\le 1$, and the inner minimum is attained by choosing
$\xi_i=-\operatorname{sgn}(t_i(x_i))$, yielding
\begin{equation}\label{eq:box_constraint}
\sum_{i=1}^n s_i(x_i)\;-\;\sum_{i=1}^n |t_i(x_i)|\ \ge\ 0.
\end{equation}
Hence the box-robust pricing problem can be written as
\begin{align}
    \max_{\bm{x}} \quad & \sum_{i=1}^n v_i(x_i)
\qquad\text{where } v_i(x):=\omega_i\,x\,\hat g_i(x), \label{eq:box_problem}\\
    \text{s.t.} \quad & \sum_{i=1}^n\Big(s_i(x_i)-|t_i(x_i)|\Big)\ \ge\ 0, \nonumber\\
    & x_i \in \mathcal X_i\cap[(1-\sigma_i)a_i,(1+\sigma_i)a_i]\qquad(\forall i). \nonumber
\end{align}
In contrast to the coupled case $\Gamma<n$, the penalty in \eqref{eq:box_constraint} is additive and therefore separable across items.

\subsubsection{Coupled Uncertainty (\texorpdfstring{$\Gamma<n$}{Gamma<n}): Parametric Decomposition}
For the coupled case $\Gamma<n$, items interact through the shared uncertainty budget.
By \Cref{thm:robust}, the robust margin constraint is equivalent to
\begin{equation}\label{eq:robust_coupled_raw}
\sum_{i=1}^n s_i(x_i)\;-\;\beta(\bm{x},\Gamma)\ \ge\ 0,
\end{equation}
where $\beta(\bm{x},\Gamma)=\sum_{k=1}^\Gamma |t|_{(k)}(\bm{x})$ is the sum of the $\Gamma$ largest values of $\{|t_i(x_i)|\}_{i=1}^n$.
While this characterization is explicit, direct optimization over $\beta(\bm{x},\Gamma)$ is difficult because it couples all items
through a sorting operation. We resolve this by observing that $\beta(\bm{x},\Gamma)$ admits a dual scalar representation:
\begin{equation}\label{eq:inner_knap}
\beta(\bm{x},\Gamma)
=\max_{\bm{\xi}}\left\{\sum_{i=1}^n |t_i(x_i)|\,\xi_i \ \middle|\ \sum_{i=1}^n \xi_i\le \Gamma,\ 0\le \xi_i\le 1\ \forall i\right\}.
\end{equation}
\begin{theorem}[Parametric decomposition of the coupled $\Gamma$-budget penalty]\label{thm:parametric}
For every $\bm{x}$, the penalty $\beta(\bm{x},\Gamma)$ in \eqref{eq:inner_knap} admits the dual representation
\begin{equation}\label{eq:psi_dual}
\beta(\bm{x},\Gamma)
=\min_{\theta\ge 0}\Bigg\{\Gamma\theta+\sum_{i=1}^n \max\big\{0,\ |t_i(x_i)|-\theta\big\}\Bigg\}.
\end{equation}
Consequently, the robust margin constraint \eqref{eq:robust_coupled_raw} holds if and only if there exists $\theta\ge 0$ such that
\begin{equation}\label{eq:dual_robust_clean}
\sum_{i=1}^n\Big(s_i(x_i)-\max\big\{0,\ |t_i(x_i)|-\theta\big\}\Big)\ \ge\ \Gamma\theta.
\end{equation}
For any fixed $\theta\ge 0$, the left-hand side of \eqref{eq:dual_robust_clean} is separable across items.
\end{theorem}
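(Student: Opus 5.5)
The plan is to prove the dual representation \eqref{eq:psi_dual} by LP duality applied to the inner problem \eqref{eq:inner_knap}, and then derive the existence statement \eqref{eq:dual_robust_clean} from the fact that the minimum over $\theta$ is attained. Fix $\bm x$ and write $\tau_i:=|t_i(x_i)|\ge 0$. The primal is $\max\{\sum_i\tau_i\xi_i : \sum_i\xi_i\le\Gamma,\ 0\le\xi_i\le 1\}$. Attach a multiplier $\theta\ge0$ to the budget constraint and $u_i\ge0$ to the bounds $\xi_i\le1$. The dual is then
\[
\min\Big\{\Gamma\theta+\sum_i u_i : \theta+u_i\ge\tau_i,\ \theta\ge0,\ u_i\ge0\Big\}.
\]
For fixed $\theta$, the optimal choice is $u_i=\max\{0,\tau_i-\theta\}$, which gives exactly the objective in \eqref{eq:psi_dual}. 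The primal is feasible ($\xi=0$) and bounded, so strong LP duality gives equality and guarantees that the dual minimum is attained.

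As a sanity check that avoids duality, I would also verify the identity directly. Weak duality is elementary: for any feasible $\xi$ and any $\theta\ge0$,
\[
\sum_i\tau_i\xi_i\le\sum_i\theta\xi_i+\sum_i(\tau_i-\theta)^+\xi_i\le\Gamma\theta+\sum_i(\tau_i-\theta)^+ .
\]
For the reverse direction, take $\theta^\ast=|t|_{(\Gamma)}$ when $\Gamma\ge1$, and $\theta^\ast=|t|_{(1)}$ (or any $\theta\ge\max_i\tau_i$) when $\Gamma=0$. Then
\[
\Gamma\theta^\ast+\sum_i(\tau_i-\theta^\ast)^+=\sum_{k\le\Gamma}|t|_{(k)}=\beta(\bm x,\Gamma),
\]
since only the top $\Gamma$ entries exceed $\theta^\ast$, up to ties which contribute zero. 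This matches \Cref{thm:robust} and shows the minimum is attained at an explicit $\theta^\ast$, which will also be useful for the later candidate-threshold result. The case $\Gamma=0$ needs separate handling: there $\beta=0$ and $\theta\ge\max_i\tau_i$ achieves the value $0$.

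The equivalence then follows. By \Cref{thm:robust}, the robust constraint is $\sum_i s_i(x_i)\ge\beta(\bm x,\Gamma)$.

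\emph{If} some $\theta\ge0$ satisfies \eqref{eq:dual_robust_clean}, then
\[
\sum_i s_i(x_i)\ge\Gamma\theta+\sum_i(\tau_i-\theta)^+\ge\beta(\bm x,\Gamma)
\]
by the minimum representation.

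\emph{Only if}: take $\theta=\theta^\ast$, at which the minimum is attained, so \eqref{eq:dual_robust_clean} holds with equality in the bound on $\beta$.

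Separability is immediate, because for fixed $\theta$ each summand $s_i(x_i)-\max\{0,|t_i(x_i)|-\theta\}$ depends only on $x_i$, and $\Gamma\theta$ is a constant.

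The main obstacle is bookkeeping rather than depth: the attainment of the minimum must be justified, since the "only if" direction needs an actual $\theta$ and not just an infimum. Ties and the $\Gamma=0$ edge case must also be handled correctly in the explicit choice of $\theta^\ast$. I would lean on the explicit $\theta^\ast$ construction rather than abstract duality, so that attainment is self-evident.
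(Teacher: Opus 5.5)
Your proposal is correct, and its main line is exactly the paper's proof: dualize \eqref{eq:inner_knap} with $\theta$ on the budget row and $u_i$ (the paper's $\pi_i$) on the upper bounds, take $u_i=\max\{0,|t_i(x_i)|-\theta\}$, invoke strong duality, and then substitute into \eqref{eq:robust_coupled_raw}. Your explicit minimizer $\theta^\ast=|t|_{(\Gamma)}$ (or $\max_i|t_i(x_i)|$ when $\Gamma=0$) is a worthwhile addition for two reasons: it makes explicit the attainment needed in the ``only if'' direction, which the paper leaves implicit in LP dual attainment; and since $\theta^\ast$ is itself one of the $|t_i(x_i)|$, it yields \Cref{prop:theta_candidates_discrete_clean} directly, without the piecewise-affine endpoint argument.
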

\begin{proof}
Fix $\bm{x}$. Problem \eqref{eq:inner_knap} is a feasible and bounded linear program over a nonempty compact polytope.
Its dual is
\[
\min_{\theta\ge 0,\ \bm{\pi}\ge 0}\ \Gamma\theta+\sum_{i=1}^n \pi_i
\quad\text{s.t.}\quad
\theta+\pi_i\ge |t_i(x_i)|\ \ (\forall i).
\]
Strong duality yields equality of primal and dual optimal values. For any fixed $\theta\ge 0$, the minimal feasible choice is
$\pi_i=\max\{0,\ |t_i(x_i)|-\theta\}$, which gives \eqref{eq:psi_dual}. Substituting \eqref{eq:psi_dual} into
\eqref{eq:robust_coupled_raw} yields \eqref{eq:dual_robust_clean}.
\end{proof}
% ============================================================
% SECTION: A Robust Hull-Greedy Algorithm for the Discrete Problem
% ============================================================
\section{A Robust Hull-Greedy Algorithm for the Discrete Problem}\label{sec:algorithm}

We develop a method for the \emph{discrete} $\Gamma$-budget robust pricing problem. After the reduction in \Cref{sec:intro}, the
problem is an MCKP and remains NP-hard, so we target exact robust-feasibility certification together with explicit performance guarantees relative to LP benchmarks.
The algorithm is driven by two structural results: the one-dimensional parametric reformulation of the coupled robust constraint
(\Cref{thm:parametric}) and the exact hull-greedy solution of the fixed-parameter LP relaxation (\Cref{thm:master_greedy}).

\subsection{Exact robust feasibility via a scalar parameter}\label{subsec:theta}

For each admissible option $x_{i,j}\in\mathcal X_i^{\mathrm{ad}}$, define
\[
v_{ij}:=\omega_i\,x_{i,j}\,\hat g_i(x_{i,j}),\qquad
s_{ij}:=\omega_i\,(x_{i,j}-\Delta a_i)\,\hat g_i(x_{i,j}),\qquad
t_{ij}:=\omega_i\,(x_{i,j}-\Delta a_i)\,\delta_i(x_{i,j}).
\]
By \Cref{thm:parametric}, the robust margin constraint is equivalent to the existence of $\theta\ge 0$ such that
\begin{equation}\label{eq:theta_feas_clean}
\sum_{i=1}^n\Big(s_i(x_i)-\max\{0,\ |t_i(x_i)|-\theta\}\Big)\ \ge\ \Gamma\theta.
\end{equation}
For fixed $\theta\ge 0$, introduce the per-option $\theta$-modified contributions
\begin{equation}\label{eq:s_theta_def_clean}
s^\theta_{ij}\;:=\;s_{ij}-\max\{0,\ |t_{ij}|-\theta\}.
\end{equation}
Then \eqref{eq:theta_feas_clean} becomes the separable inequality
\begin{equation}\label{eq:theta_margin_clean}
\sum_{i=1}^n s^\theta_{i,\,j(i)}\ \ge\ \Gamma\theta,
\qquad j(i)\in\{1,\dots,m\}\ \text{chosen for each }i.
\end{equation}
\noindent where $j(i)$ denotes the index of the option selected for item $i$.

\begin{proposition}[Finite candidate set for \texorpdfstring{$\theta$}{theta}]\label{prop:theta_candidates_discrete_clean}
Let
\[
\mathcal B\;:=\;\{0\}\ \cup\ \{|t_{ij}|:\ i=1,\dots,n,\ x_{i,j}\in\mathcal X_i^{\mathrm{ad}}\}.
\]
If the discrete robust problem is feasible, then there exists an optimal solution $(\bm x^\star,\theta^\star)$ with $\theta^\star\in\mathcal B$.
\end{proposition}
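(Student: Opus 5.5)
Fix an optimal discrete robust solution $\bm x^\star$; one exists because the feasible set is finite and, by assumption, nonempty. By \Cref{thm:parametric}, robust feasibility of $\bm x^\star$ means that the function
\[
F(\theta)\;:=\;\Gamma\theta+\sum_{i=1}^n \max\{0,\ |t_i(x^\star_i)|-\theta\}
\]
satisfies $\min_{\theta\ge 0}F(\theta)=\beta(\bm x^\star,\Gamma)\le \sum_i s_i(x^\star_i)$. The objective $N(\bm x)$ does not depend on $\theta$. It therefore suffices to show that this minimum over $\theta\ge 0$ is attained at a point of
\[
\mathcal B(\bm x^\star):=\{0\}\cup\{|t_i(x^\star_i)|: i=1,\dots,n\}\subseteq \mathcal B.
\]
For that $\theta^\star$, the separable inequality \eqref{eq:theta_feas_clean} then holds, so $(\bm x^\star,\theta^\star)$ is feasible for the lifted problem, and it is optimal because its value is $N(\bm x^\star)$.

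The key step is the piecewise-linear structure of $F$. Each term $\max\{0,|t_i(x_i^\star)|-\theta\}$ is convex and piecewise linear in $\theta$, with a single kink at $|t_i(x^\star_i)|$. Hence $F$ is convex and piecewise linear on $[0,\infty)$, and all of its breakpoints lie in $\mathcal B(\bm x^\star)$. Beyond the largest breakpoint $T:=\max_i|t_i(x^\star_i)|$, $F$ has slope $\Gamma\ge 0$. So $F$ is nondecreasing on $[T,\infty)$, the infimum is attained on the compact interval $[0,T]$, and a minimizer exists.

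To place the minimizer at a breakpoint, I would argue as follows. A continuous piecewise-linear function on $[0,T]$ is affine between consecutive points of $\mathcal B(\bm x^\star)$ (ordered increasingly, starting at $0$). An affine function on a closed interval attains its minimum at an endpoint. Minimizing over finitely many such intervals therefore gives a minimizer in $\mathcal B(\bm x^\star)$. Equivalently, one can write down the minimizer explicitly: the slope of $F$ on an open piece is $\Gamma-\#\{i:|t_i|>\theta\}$, so $\theta=|t|_{(\Gamma)}(\bm x^\star)$ (or $\theta=0$ when $\Gamma=0$) is a minimizer. This also confirms directly that $F(\theta^\star)=\beta(\bm x^\star,\Gamma)$.

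The only subtle point is the case where the minimum of $F$ is attained on a whole interval, i.e.\ a zero slope, which happens when ties occur or $\Gamma$ equals the number of positive $|t_i|$. I would handle this simply by picking an endpoint of that interval, which lies in $\mathcal B$. The degenerate cases $\Gamma=0$ (take $\theta^\star=0$, since the slope is $-\#\{i:|t_i|>\theta\}\le 0$ only at $\theta\ge T$; instead just note $F(T)=0=\beta$) and $\Gamma=n$ ($\theta^\star=0$ gives $F(0)=\sum_i|t_i|=\beta$) should be checked separately. After that, the proof is complete by the reduction in the first paragraph.
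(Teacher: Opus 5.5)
Your proof is correct and is essentially the paper's argument: the paper works with the concave function $f_{\bm x}(\theta)=\sum_i s_i(x_i)-F(\theta)$ and moves any certifying $\theta_0$ to an endpoint of its affine piece, or to the left end of the final ray where the slope is $-\Gamma\le 0$, which is the same breakpoint/endpoint reasoning as your minimization of the convex $F$ over $[0,T]$. One small slip: for $\Gamma=0$ the minimizer of $F$ is $T=\max_i|t_i(x_i^\star)|$, not $\theta=0$ (you eventually note this via $F(T)=0=\beta$), but your main endpoint argument already handles that case correctly, so nothing breaks.
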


\begin{proof}
Fix a discrete $\bm x$. Define
\[
f_{\bm x}(\theta)
:=\sum_{i=1}^n\Big(s_i(x_i)-\max\{0,\ |t_i(x_i)|-\theta\}\Big)-\Gamma\theta.
\]
Then $f_{\bm x}$ is concave piecewise-affine in $\theta$, with breakpoints contained in $\{0\}\cup\{|t_i(x_i)|\}_{i=1}^n$.
If $f_{\bm x}(\theta_0)\ge 0$ for some $\theta_0\ge 0$ and $\theta_0$ lies in a bounded affine interval, the maximum of that affine
function over the interval occurs at an endpoint, so at least one endpoint is also nonnegative.  On the final ray
$[\max_i|t_i(x_i)|,\infty)$, the function equals $\sum_i s_i(x_i)-\Gamma\theta$: it is constant when $\Gamma=0$ and nonincreasing when
$\Gamma>0$, so feasibility anywhere on the ray implies feasibility at its finite left endpoint.  Thus feasibility for fixed $\bm x$ can
always be certified by some breakpoint $\theta\in\{0\}\cup\{|t_i(x_i)|\}_{i=1}^n$. Because each $|t_i(x_i)|$ equals $|t_{i,j(i)}|$ for some admissible option,
every such value belongs to~$\mathcal B$. Since this argument holds for every discrete $\bm x$ individually, the joint optimum
$(\bm x^\star,\theta^\star)$ is covered by enumerating $\theta\in\mathcal B$ and, for each fixed~$\theta$, optimizing over discrete~$\bm x$---which is precisely the structure exploited by Algorithm~\ref{alg:robust_hull_greedy}.
\end{proof}

\subsection{Fixed-\texorpdfstring{$\theta$}{theta} reformulation as an MCKP}\label{subsec:theta_mckp}

Fix $\theta\in\mathcal B$. Convert \eqref{eq:theta_margin_clean} into a knapsack inequality with nonnegative costs via a baseline--slack
transformation. For each item $i$, choose
\[
j^\theta(i)\in\operatorname*{arg\,max}_{j}\ s^\theta_{ij},\qquad
S^\theta_{\max}:=\sum_{i=1}^n s^\theta_{i,\,j^\theta(i)},\qquad
c^\theta_{ij}:=s^\theta_{i,\,j^\theta(i)}-s^\theta_{ij}\ \ (\ge 0).
\]
Then \eqref{eq:theta_margin_clean} is equivalent to
\begin{equation}\label{eq:theta_knapsack_clean}
\sum_{i=1}^n c^\theta_{i,\,j(i)}\ \le\ C^\theta,
\qquad
C^\theta:=S^\theta_{\max}-\Gamma\theta.
\end{equation}
Thus, for fixed $\theta$, robust-feasible discrete pricing is exactly the MCKP
\begin{equation}\label{eq:mckp_theta_clean}
\max \ \sum_{i=1}^n v_{i,\,j(i)}
\quad\text{s.t.}\quad
\sum_{i=1}^n c^\theta_{i,\,j(i)}\le C^\theta,
\quad
x_{i,j(i)}\in\mathcal X_i^{\mathrm{ad}}\ \ (\forall i).
\end{equation}

\subsection{Upper-hull filtering and an exact LP solver for fixed \texorpdfstring{$\theta$}{theta}}\label{subsec:hull_theta}

Fix $\theta\in\mathcal B$ with $C^\theta\ge 0$. Item $i$ induces the point set
\[
\mathcal P_i^\theta:=\{(c^\theta_{ij},v_{ij}) : x_{i,j}\in\mathcal X_i^{\mathrm{ad}}\}\subset\mathbb R^2.
\]
Before computing the hull, apply a standard preprocessing step: (a) if multiple points share the same cost $c$, keep only the one with
largest value $v$; (b) remove dominated points (if $c\le c'$ and $v\ge v'$, discard $(c',v')$). This does not change the upper hull.
Apply \emph{upper-hull filtering} by replacing $\mathcal P_i^\theta$ with the ordered vertex sequence
\[
\mathcal H_i^\theta=\{(c^{H,\theta}_{i,1},v^{H,\theta}_{i,1}),\dots,(c^{H,\theta}_{i,p_i},v^{H,\theta}_{i,p_i})\}
\]
of the upper hull of $\operatorname{conv}(\mathcal P_i^\theta)$ (ordered by increasing $c$). Since $\mathcal H_i^\theta$ consists of
vertices of $\operatorname{conv}(\mathcal P_i^\theta)$, each hull vertex is one of the original menu points $(c^\theta_{ij},v_{ij})$ and
therefore corresponds to a discrete option.
Define hull segment lengths and slopes
\[
\Delta c^{H,\theta}_{i,k}:=c^{H,\theta}_{i,k+1}-c^{H,\theta}_{i,k}>0,\qquad
\rho^{H,\theta}_{i,k}:=\frac{v^{H,\theta}_{i,k+1}-v^{H,\theta}_{i,k}}{\Delta c^{H,\theta}_{i,k}}
\quad (k=1,\dots,p_i-1).
\]
By \Cref{thm:master_greedy}, the LP relaxation of \eqref{eq:mckp_theta_clean} is solved \emph{exactly} by greedily filling hull segments in
nonincreasing order of $\rho^{H,\theta}_{i,k}$. Moreover, there exists an optimal LP solution in which at most one item is fractional (mixed
between two adjacent hull vertices).

\paragraph{Exact robust-feasibility certificate (final check).}\label{par:cert}
For any discrete price vector $\bm x$, robust feasibility can be certified directly by
\begin{equation}\label{eq:certify_alg}
\sum_{i=1}^n s_i(x_i)\;-\;\beta(\bm{x},\Gamma)\ \ge\ 0,
\qquad
\beta(\bm{x},\Gamma)=\sum_{k=1}^{\Gamma}|t|_{(k)}(\bm{x}),
\end{equation}
where $t_i(x)=\omega_i(x-\Delta a_i)\delta_i(x)$ and $|t|_{(1)}(\bm{x})\ge\cdots\ge|t|_{(n)}(\bm{x})$ are the sorted magnitudes.

\subsection{Discrete recovery: one-item rounding with optional repair}\label{subsec:rounding}

Let $\bm z^{\mathrm{LP}}(\theta)$ be an optimal LP solution produced by the hull-greedy procedure in \Cref{subsec:hull_theta}.
If no item is fractional, it is already discrete. Otherwise, exactly one item $i^\star$ is mixed between two adjacent hull vertices
$(c^{H,\theta}_{i^\star,k},v^{H,\theta}_{i^\star,k})$ and $(c^{H,\theta}_{i^\star,k+1},v^{H,\theta}_{i^\star,k+1})$.

\paragraph{Default choice (feasibility-preserving round-down).}
Rounding $i^\star$ \emph{down} to the adjacent vertex with smaller knapsack cost $c^\theta$ weakly decreases total cost and thus preserves
\eqref{eq:theta_knapsack_clean}; all other items remain unchanged. This yields a feasible discrete solution $\bm x^{\downarrow}(\theta)$.

\paragraph{Optional improvement (round-up + repair).}
In practice, rounding down can lose a larger single-item value jump than necessary. As an optional refinement, also consider rounding
$i^\star$ \emph{up} to the higher vertex, obtaining $\bm x^{\uparrow}(\theta)$, which may violate \eqref{eq:theta_knapsack_clean} by some
excess cost $\Delta C>0$. If $\Delta C>0$, attempt to restore feasibility by a \emph{repair} step that performs discrete \emph{downgrades} on other
items along their hull chains: repeatedly select a downgrade that reduces knapsack cost by at least a small amount while minimizing loss of
objective, specifically by selecting the downgrade $(i, k \to k{-}1)$ along item $i$'s hull chain that minimizes the ratio
$(v^{H,\theta}_{i,k}-v^{H,\theta}_{i,k-1})/(c^{H,\theta}_{i,k}-c^{H,\theta}_{i,k-1})$ among items with remaining downgrade capacity.  The
process terminates after finitely many steps; if the available downgrades do not remove the excess, the round-up candidate is discarded.
Finally, keep the better of the feasible candidates,
\[
\bm x(\theta)\in\arg\max\{N(\bm x^{\downarrow}(\theta)),\ N(\bm x^{\uparrow\!\to\!\mathrm{repair}}(\theta))\}.
\]
This refinement is heuristic but feasibility-preserving. All theoretical guarantees (Propositions~\ref{prop:rg_additive_theta} and~\ref{prop:igap_additive_theta_clean}) apply to the round-down solution $\bm x^{\downarrow}(\theta)$ alone; the optional round-up-and-repair and completion steps can only improve the objective and are not required for the stated bounds.

\paragraph{Feasibility-preserving completion.}
While capacity remains, apply discrete \emph{upgrades} along each item's hull chain (adjacent hull vertices) in decreasing order of slope
among upgrades that fit. This can only improve the objective and preserves \eqref{eq:theta_knapsack_clean}.

\subsection{Complete algorithm and guarantees}\label{subsec:rg_full}

We enumerate $\theta\in\mathcal B$ (which is sufficient by \Cref{prop:theta_candidates_discrete_clean}), solve the corresponding LP
relaxation exactly by hull-greedy, and recover a discrete feasible solution by one-item rounding (optionally with repair/completion); see
Algorithm~\ref{alg:robust_hull_greedy}.

\begin{proposition}[Additive performance bound (per fixed $\theta$)]\label{prop:rg_additive_theta}
Fix $\theta\in\mathcal B$ with $C^\theta\ge 0$. Let $\mathrm{OPT}_{\mathrm{LP}}(\theta)$ and $\mathrm{OPT}_{\mathrm{IP}}(\theta)$ denote the
optimal objective values of the LP relaxation of \eqref{eq:mckp_theta_clean} and of \eqref{eq:mckp_theta_clean}, respectively.
Let $\bm x(\theta)$ be the rounded solution produced by Algorithm~\ref{alg:robust_hull_greedy} (before optional post-processing). Then
\[
\mathrm{OPT}_{\mathrm{LP}}(\theta)-N(\bm x(\theta))\ \le\ \Delta V_{\max}^\theta,
\qquad\text{and hence}\qquad
\mathrm{OPT}_{\mathrm{IP}}(\theta)-N(\bm x(\theta))\ \le\ \Delta V_{\max}^\theta,
\]
where $\Delta V_{\max}^\theta$ is defined in \Cref{subsec:igap}.
\end{proposition}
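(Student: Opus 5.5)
We work directly from the hull-greedy LP optimum of \Cref{thm:master_greedy}(iii), applied to the fixed-$\theta$ MCKP \eqref{eq:mckp_theta_clean}. Since $\Delta V^\theta_{\max}$ is only defined later, in \Cref{subsec:igap}, we assume it is (or at least upper-bounds) the largest adjacent hull-value jump over the relevant frontiers:
\[
\Delta V^\theta_{\max}\ \ge\ \max_{i,k}\bigl(v^{H,\theta}_{i,k+1}-v^{H,\theta}_{i,k}\bigr),
\]
with the maximum taken over all items and hull segments. If the later definition is narrower, for instance the jump of the fractional item only, the same argument applies verbatim with that jump.

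\textbf{Step 1 (structure of the LP optimum).} Because $C^\theta\ge 0=\sum_i c^{\theta}_{i,j^\theta(i)}$ and the baseline option has cost $0$, the LP is feasible. By \Cref{thm:master_greedy}(iii), the greedy fill of hull segments gives an optimal LP solution $\bm z^{\mathrm{LP}}(\theta)$ with the following form:
\begin{itemize}
\item every item $i\neq i^\star$ sits at a single hull vertex $(c^{H,\theta}_{i,k_i},v^{H,\theta}_{i,k_i})$;
\item item $i^\star$, if it exists, is at $\lambda\,(c^{H,\theta}_{i^\star,k+1},v^{H,\theta}_{i^\star,k+1})+(1-\lambda)\,(c^{H,\theta}_{i^\star,k},v^{H,\theta}_{i^\star,k})$ for some $\lambda\in[0,1)$.
\end{itemize}
The LP value therefore splits as
\[
\mathrm{OPT}_{\mathrm{LP}}(\theta)=\sum_{i\ne i^\star}v^{H,\theta}_{i,k_i}+v^{H,\theta}_{i^\star,k}+\lambda\bigl(v^{H,\theta}_{i^\star,k+1}-v^{H,\theta}_{i^\star,k}\bigr).
\]
If no item is fractional, the LP optimum is already integral, and the bound holds with gap $0$.

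\textbf{Step 2 (round-down).} The round-down solution $\bm x^{\downarrow}(\theta)$ keeps all $k_i$ and sends $i^\star$ to vertex $k$. Each hull vertex is an original menu point (\Cref{subsec:hull_theta}), so this is a discrete admissible choice. Its cost is at most the LP cost, which is at most $C^\theta$, so it satisfies \eqref{eq:theta_knapsack_clean}. By the equivalence in \Cref{subsec:theta_mckp} and \Cref{thm:parametric}, it is robust feasible.

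Its value is exactly $\mathrm{OPT}_{\mathrm{LP}}(\theta)-\lambda\bigl(v^{H,\theta}_{i^\star,k+1}-v^{H,\theta}_{i^\star,k}\bigr)$. Using $\lambda<1$ and the fact that hull values increase along the frontier, the gap is at most one adjacent jump, and hence at most $\Delta V^\theta_{\max}$. Here we use that $N(\bm x(\theta))$ equals $\sum_i v_{i,j(i)}$ by the definition of $v_{ij}$ in \Cref{par:MCKP}.

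\textbf{Step 3 (IP bound).} The IP feasible set is contained in the LP feasible set, so $\mathrm{OPT}_{\mathrm{IP}}(\theta)\le\mathrm{OPT}_{\mathrm{LP}}(\theta)$. The second inequality then follows immediately from the first.

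\textbf{Main obstacle.} The delicate point is Step 1: the greedy solution of the segment knapsack \eqref{eq:cont_knap_master} must map back to a genuine MCKP LP point with exactly one item fractional between adjacent vertices. This requires two facts:
\begin{itemize}
\item Greedy fills each item's segments in prefix order. This holds because slopes on each concave frontier are nonincreasing, as \Cref{thm:master_greedy}(ii) notes. Ties should be broken by segment index within an item, so that prefix order is preserved even with equal slopes.
\item The frontier starts at the minimum-cost point, cost $0$. This must be checked after the preprocessing that removes dominated points.
\end{itemize}
Once prefix filling is ensured, the rest is bookkeeping.
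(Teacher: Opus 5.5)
Your proposal is correct and follows essentially the same route as the paper: invoke \Cref{thm:master_greedy}(iii) for an LP optimum with at most one item mixed between adjacent hull vertices, round that item down to the cheaper vertex (feasible, losing $\lambda$ times one adjacent jump, hence at most $\Delta V_{\max}^\theta$), and obtain the IP version from $\mathrm{OPT}_{\mathrm{IP}}(\theta)\le\mathrm{OPT}_{\mathrm{LP}}(\theta)$. Your extra bookkeeping (the zero-cost baseline giving LP feasibility, the exact loss formula, and within-item tie-breaking to preserve prefix order) makes explicit what the paper's short proof leaves implicit, and $\Delta V_{\max}^\theta$ is indeed defined as the maximum adjacent jump over all items and hull segments, exactly as you assumed.
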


\begin{proof}
By \Cref{thm:master_greedy}(iii), there exists an optimal LP solution mixing at most one item between two adjacent hull vertices.
Rounding down that item (to the adjacent vertex with smaller $c^\theta$) preserves feasibility and decreases the objective by at most the
corresponding adjacent value jump, bounded by $\Delta V_{\max}^\theta$. The second inequality follows from
$\mathrm{OPT}_{\mathrm{LP}}(\theta)\ge \mathrm{OPT}_{\mathrm{IP}}(\theta)$.
\end{proof}

\begin{algorithm}[H]
\footnotesize
\caption{Robust hull-greedy with $\theta$-enumeration and discrete recovery}\label{alg:robust_hull_greedy}
\begin{algorithmic}[1]
\State \textbf{Input:} admissible menus $\{\mathcal X_i^{\mathrm{ad}}\}$, data $(v_{ij},s_{ij},t_{ij})$, budget $\Gamma$.
\State Form $\mathcal B=\{0\}\cup\{|t_{ij}|:\ x_{i,j}\in\mathcal X_i^{\mathrm{ad}}\}$, deduplicate, and sort increasingly.
\State $\textsc{BestVal}\leftarrow -\infty$, $\textsc{BestSol}\leftarrow \emptyset$.
\For{$\theta\in\mathcal B$}
    \State Compute $s^\theta_{ij}$ via \eqref{eq:s_theta_def_clean}.
    \State Choose $j^\theta(i)\in\arg\max_j s^\theta_{ij}$ and set $c^\theta_{ij}=s^\theta_{i,\,j^\theta(i)}-s^\theta_{ij}$.
    \State $C^\theta\leftarrow \sum_{i=1}^n s^\theta_{i\,j^\theta(i)}-\Gamma\theta$. \If{$C^\theta<0$} \textbf{continue}. \EndIf
    \For{$i=1,\dots,n$}
        \State Build $\mathcal P_i^\theta=\{(c^\theta_{ij},v_{ij})\}_j$.
        \State Preprocess $\mathcal P_i^\theta$: merge equal costs (keep max $v$), then remove dominated points.
        \State Compute the upper hull $\mathcal H_i^\theta$ and the slopes $\rho^{H,\theta}_{i,k}$ and lengths $\Delta c^{H,\theta}_{i,k}$.
    \EndFor
    \State Solve the LP relaxation of \eqref{eq:mckp_theta_clean} exactly via hull-greedy over $\rho^{H,\theta}_{i,k}$.
    \State Recover a discrete solution $\bm x(\theta)$ via \Cref{subsec:rounding} (round-down; optionally round-up+repair; optionally complete).
    \State Compute $N(\bm x(\theta))=\sum_{i=1}^n \omega_i\,x_i(\theta)\,\hat g_i(x_i(\theta))$.
    \State Compute the exact robust certificate $Z(\bm x(\theta)):=\sum_i s_i(x_i(\theta))-\beta(\bm x(\theta),\Gamma)$ as in \eqref{eq:certify_alg}.
    \If{$Z(\bm x(\theta))<0$} \textbf{continue}. \EndIf
    \If{$N(\bm x(\theta))>\textsc{BestVal}$}
        \State $\textsc{BestVal}\leftarrow N(\bm x(\theta))$; $\textsc{BestSol}\leftarrow (\theta,\bm x(\theta))$.
    \EndIf
\EndFor
\If{$\textsc{BestSol}=\emptyset$}
    \State \textbf{Output:} \textsc{Infeasible} (no $\theta\in\mathcal B$ yields $C^\theta\ge 0$ and a feasible selection).
\Else
    \State \textbf{Output:} $\textsc{BestSol}$.
\EndIf
\end{algorithmic}
\end{algorithm}

\paragraph{Practical note on $\theta$ enumeration.}
The exact candidate set $\mathcal B$ can be as large as $|\mathcal B|\le nm+1$ and retains every distinct original deviation.  Restricting candidates to breakpoints induced by an option filter is a heuristic unless the filter is proved threshold-safe; it is not used to support exactness claims.  Equation~\eqref{eq:s_theta_def_clean} implies
\[
s^\theta_{ij}=
\begin{cases}
s_{ij}, & \theta\ge |t_{ij}|,\\
s_{ij}-|t_{ij}|+\theta, & \theta<|t_{ij}|,
\end{cases}
\]
so $s^\theta_{ij}$ is piecewise-affine in $\theta$ with breakpoints at $|t_{ij}|$. Consequently, a sweep may track coefficient changes incrementally while still visiting the same complete original set.  The implementation reconstructs the authoritative fixed-threshold data at each breakpoint before computing a bound.  Hull reuse is valid only when the complete cost--value point set is unchanged; otherwise the hull must be rebuilt.  Regardless of the search strategy, the certificate \eqref{eq:certify_alg} provides an exact final check of each returned decision.

% ============================================================
% INTEGRALITY GAP (BENCHMARKING JUSTIFICATION)
% ============================================================
\subsection{Integrality gap and its asymptotic irrelevance}\label{subsec:igap}

For benchmarking, we compare discrete solutions against the LP relaxation of
\eqref{eq:mckp_theta_clean}.  At fixed $\theta$, the LP value is an upper bound
on the integer value, and the difference is controlled by a \emph{single} item.

\medskip
\noindent\textbf{Setup (fixed $\theta$).}
Fix $\theta\in\mathcal B$ with $C^\theta\ge 0$. For each item $i$, let
\[
\mathcal P_i^\theta:=\{(c^\theta_{ij},v_{ij}) : x_{i,j}\in\mathcal X_i^{\mathrm{ad}}\}\subset\mathbb R^2,
\]
and let $\mathcal H_i^\theta=\{(c^{H,\theta}_{i,1},v^{H,\theta}_{i,1}),\dots,(c^{H,\theta}_{i,p_i},v^{H,\theta}_{i,p_i})\}$
be the upper-hull vertex sequence of $\operatorname{conv}(\mathcal P_i^\theta)$, ordered by strictly increasing cost
$c^{H,\theta}_{i,1}<\cdots<c^{H,\theta}_{i,p_i}$.
Define the maximal adjacent hull jump
\[
\Delta V_{\max}^\theta
:=\max_{i}\ \max_{k=1,\dots,p_i-1}\big(v^{H,\theta}_{i,k+1}-v^{H,\theta}_{i,k}\big)\ \ge\ 0,
\]
with the convention $\max_{k\in\emptyset}(\cdot)=0$ when $p_i=1$.

\begin{proposition}[Additive integrality-gap bound]\label{prop:igap_additive_theta_clean}
For every fixed $\theta\in\mathcal B$ with $C^\theta\ge 0$,
\[
0\ \le\ \mathrm{OPT}_{\mathrm{LP}}(\theta)-\mathrm{OPT}_{\mathrm{IP}}(\theta)\ \le\ \Delta V_{\max}^\theta .
\]
\end{proposition}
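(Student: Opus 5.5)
The plan is to prove the two inequalities separately. The lower bound is the usual relaxation argument, and the upper bound comes from \Cref{prop:rg_additive_theta}, or equivalently from rounding an explicit LP optimum.

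\emph{Lower bound.} Fix $\theta\in\mathcal B$ with $C^\theta\ge 0$. Every feasible discrete selection of \eqref{eq:mckp_theta_clean}, encoded by binary $z_{ij}$, is also feasible for the LP relaxation. Hence $\mathrm{OPT}_{\mathrm{IP}}(\theta)\le\mathrm{OPT}_{\mathrm{LP}}(\theta)$. For this to be meaningful, the IP must be feasible. It is: choosing the minimum-cost option in each item gives cost $\sum_i\min_j c^\theta_{ij}=0\le C^\theta$, because $c^\theta_{i,j^\theta(i)}=0$. So both optima exist and are finite, since the menus are finite.

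\emph{Upper bound.} Apply \Cref{thm:master_greedy} to the fixed-$\theta$ instance, with costs $c^\theta_{ij}$, values $v_{ij}$ and capacity $C^\theta$. Its LP is feasible because $C^\theta\ge0=\sum_i c^{H,\theta}_{i,1}$. The first hull vertex of each item has cost $0$: after removing dominated points, the leftmost frontier point is the cheapest option with the largest value at that cost.

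By part (iii) there is an optimal LP solution in which every item except at most one, say $i^\star$, sits at a single hull vertex. Item $i^\star$, if it exists, is a convex combination $\lambda(c^{H,\theta}_{i^\star,k},v^{H,\theta}_{i^\star,k})+(1-\lambda)(c^{H,\theta}_{i^\star,k+1},v^{H,\theta}_{i^\star,k+1})$ of two adjacent vertices. Every hull vertex is an original menu point, so it corresponds to a discrete option. Now replace the mixed item by vertex $k$. Total cost drops by $(1-\lambda)\Delta c^{H,\theta}_{i^\star,k}\ge0$, so the knapsack constraint stays satisfied and we get an IP-feasible selection $\bm x^\downarrow$. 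Value drops by $(1-\lambda)(v^{H,\theta}_{i^\star,k+1}-v^{H,\theta}_{i^\star,k})\le\Delta V_{\max}^\theta$. Therefore
\[
\mathrm{OPT}_{\mathrm{IP}}(\theta)\ \ge\ N(\bm x^\downarrow)\ \ge\ \mathrm{OPT}_{\mathrm{LP}}(\theta)-\Delta V_{\max}^\theta .
\]
If no item is fractional, the LP optimum is already integral and the gap is $0\le\Delta V_{\max}^\theta$. This is exactly the content of \Cref{prop:rg_additive_theta}, which can be cited directly.

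\emph{Main obstacle.} The delicate step is justifying that \Cref{thm:master_greedy} gives an LP optimum whose components really are hull vertices corresponding to admissible discrete options, and not arbitrary points on the frontier. The preprocessing (merging equal costs, dominance removal) must not change the LP value. I would handle this by invoking part (i) together with the remark that vertices of $\operatorname{conv}(\mathcal P_i^\theta)$ belong to $\mathcal P_i^\theta$. I would also check that the greedy's prefix-fill structure from part (ii) maps back to a genuine frontier point, which the theorem asserts.
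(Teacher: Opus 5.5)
Your proof is correct and follows essentially the same route as the paper. The lower bound is the relaxation argument, and for the upper bound you use \Cref{thm:master_greedy}(iii) to get an LP optimum with at most one item mixed between adjacent hull vertices, then round that item down to the cheaper vertex at a value loss of at most one adjacent jump. Your extra checks are correct refinements that the paper leaves implicit: IP feasibility via the zero-cost baseline $j^\theta(i)$, the first hull vertex having cost $0$ (so the theorem's LP-feasibility hypothesis holds), and the sharper loss of $(1-\lambda)$ times the jump.
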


\begin{proof}
The lower bound is immediate since the LP relaxation enlarges the feasible set. For the upper bound, by
\Cref{thm:master_greedy}(i)--(iii) there exists an optimal LP solution in which at most one item $i^\star$ is fractional, and if fractional
it mixes between two \emph{adjacent} hull vertices $(c^{H,\theta}_{i^\star,k},v^{H,\theta}_{i^\star,k})$ and
$(c^{H,\theta}_{i^\star,k+1},v^{H,\theta}_{i^\star,k+1})$ for some $k$. If no item is fractional, the LP solution is integral and the gap
is $0$.

Otherwise, the fractional point has the form
\[
(c_{i^\star},v_{i^\star})
=(1-\alpha)(c^{H,\theta}_{i^\star,k},v^{H,\theta}_{i^\star,k})+\alpha(c^{H,\theta}_{i^\star,k+1},v^{H,\theta}_{i^\star,k+1}),
\qquad \alpha\in(0,1).
\]
Round $i^\star$ down to the cheaper endpoint $(c^{H,\theta}_{i^\star,k},v^{H,\theta}_{i^\star,k})$ and keep all other items unchanged.
Total knapsack cost weakly decreases, hence feasibility is preserved. The objective decreases by at most the adjacent jump
$v^{H,\theta}_{i^\star,k+1}-v^{H,\theta}_{i^\star,k}$, because $v_{i^\star}$ lies between the endpoint values. Thus
\[
\mathrm{OPT}_{\mathrm{LP}}(\theta)-\mathrm{OPT}_{\mathrm{IP}}(\theta)
\ \le\ v^{H,\theta}_{i^\star,k+1}-v^{H,\theta}_{i^\star,k}
\ \le\ \Delta V_{\max}^\theta,
\]
as claimed.
\end{proof}

\begin{corollary}[Relative gap decays as \(O(1/n)\)]\label{cor:igap_relative_theta_clean}
Consider a sequence of instances indexed by the number of items $n$, with threshold set $\mathcal B_n$. Assume there exist constants $\overline V<\infty$ and $c>0$, independent of $n$, such that for every instance in the sequence and every $\theta\in\mathcal B_n$,
\[
\Delta V_{\max}^\theta\le \overline V,
\qquad
\mathrm{OPT}_{\mathrm{LP},n}(\theta)\ge c\,n.
\]
Then, uniformly over $\theta\in\mathcal B_n$,
\[
0\ \le\
\frac{\mathrm{OPT}_{\mathrm{LP},n}(\theta)-\mathrm{OPT}_{\mathrm{IP},n}(\theta)}{\mathrm{OPT}_{\mathrm{LP},n}(\theta)}
\ \le\
\frac{\overline V}{c\,n}
\ =\ O\!\left(\frac{1}{n}\right).
\]
\end{corollary}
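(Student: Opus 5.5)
The plan is to obtain the corollary directly from \Cref{prop:igap_additive_theta_clean} by dividing by $\mathrm{OPT}_{\mathrm{LP},n}(\theta)$. First I check that the ratio is well defined. The assumption $\mathrm{OPT}_{\mathrm{LP},n}(\theta)\ge c\,n$ with $c>0$ and $n\ge 1$ makes the denominator strictly positive. The hypothesis is imposed for every $\theta\in\mathcal B_n$, and the LP value only makes sense when $C^\theta\ge 0$, so I will read the corollary as ranging over those $\theta$ with $C^\theta\ge 0$. These are also the thresholds to which \Cref{prop:igap_additive_theta_clean} applies; I will state this restriction explicitly.

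Next, fix $n$ and such a $\theta$. \Cref{prop:igap_additive_theta_clean} gives $0\le \mathrm{OPT}_{\mathrm{LP},n}(\theta)-\mathrm{OPT}_{\mathrm{IP},n}(\theta)\le \Delta V_{\max}^\theta$. I divide by the positive denominator; this preserves both inequalities, so the lower bound $0$ follows at once. For the upper bound I use $\Delta V_{\max}^\theta\le\overline V$ in the numerator and $\mathrm{OPT}_{\mathrm{LP},n}(\theta)\ge cn$ in the denominator. Since $\overline V\ge \Delta V_{\max}^\theta\ge 0$, replacing the denominator by the smaller positive quantity $cn$ can only increase the nonnegative fraction. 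This gives the bound $\overline V/(cn)$.

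Uniformity is then immediate: the constants $\overline V$ and $c$ do not depend on $n$ or $\theta$, so the same bound holds for every $\theta\in\mathcal B_n$ and therefore also for the supremum over $\theta$. This yields the $O(1/n)$ statement.

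There is no real obstacle here. The only points needing care are positivity of the denominator and the implicit restriction to thresholds with $C^\theta\ge 0$, which keeps the LP feasible so that \Cref{prop:igap_additive_theta_clean} can be invoked.
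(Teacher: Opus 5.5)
Your proposal is correct and matches the paper's proof: you invoke the additive bound of \Cref{prop:igap_additive_theta_clean}, divide by $\mathrm{OPT}_{\mathrm{LP},n}(\theta)\ge c\,n>0$, and use that $\overline V$ and $c$ do not depend on $n$ or $\theta$ to get uniformity. Your explicit restriction to thresholds with $C^\theta\ge 0$ is harmless, and it is in fact automatic: an infeasible LP could not satisfy the hypothesis $\mathrm{OPT}_{\mathrm{LP},n}(\theta)\ge c\,n$, so the paper simply leaves this point implicit.
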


\begin{proof}
For each $n$ and $\theta\in\mathcal B_n$, \Cref{prop:igap_additive_theta_clean} gives
$\mathrm{OPT}_{\mathrm{LP},n}(\theta)-\mathrm{OPT}_{\mathrm{IP},n}(\theta)\le \Delta V_{\max}^\theta\le \overline V$.
Divide by $\mathrm{OPT}_{\mathrm{LP},n}(\theta)\ge c\,n>0$. Because $\overline V$ and $c$ do not depend on $n$ or $\theta$, the bound is uniform over the threshold family.
\end{proof}

\paragraph{When do \texorpdfstring{$\overline V$}{Vbar} and $c$ exist? (Sufficient conditions)}
The assumptions $\Delta V_{\max}^\theta\le \overline V$ and $\mathrm{OPT}_{\mathrm{LP}}(\theta)\ge c\,n$ express two standard regularities:
(i) per-item value scales do not blow up, and (ii) the portfolio has nondegenerate average value. A convenient sufficient set is:

\medskip
\noindent\textbf{(A) Uniform bounded values.}
If there exists $M_v<\infty$ such that $0\le v_{ij}\le M_v$ for all items $i$, all admissible options $j\in J_i^{\mathrm{ad}}$, and all
$\theta\in\mathcal B$, then $\Delta V_{\max}^\theta\le M_v$ for all $\theta$ (take $\overline V:=M_v$), since hull vertices are menu points
and differences of two numbers in $[0,M_v]$ are at most $M_v$.
A concrete sufficient condition for such an $M_v$ is
$\sup_i\omega_i<\infty$, $\sup_{i,j}x_{i,j}<\infty$, and $\sup_{i,j}\hat g_i(x_{i,j})<\infty$, because then
$v_{ij}=\omega_i x_{i,j}\hat g_i(x_{i,j})$ is uniformly bounded.

\medskip
\noindent\textbf{(B) Uniform positive value in a feasible baseline.}
If there exists $m_v>0$ and, for every problem size and fixed $\theta$, a \emph{jointly feasible} selection $j_\theta(i)$ satisfying
$v_{i,j_\theta(i)}\ge m_v$ for every item, then that integer solution has value at least $m_v n$, hence
$\mathrm{OPT}_{\mathrm{LP}}(\theta)\ge \mathrm{OPT}_{\mathrm{IP}}(\theta)\ge m_v n$ (take $c:=m_v$).
In pricing applications this is typically ensured by assuming each item has a ``safe'' admissible baseline price that preserves the margin
constraint with slack, and exposures/demand at that baseline are bounded away from zero.

\paragraph{Practical implication.}
Under (A)--(B), the absolute LP--IP gap is controlled by a single-item jump while $\mathrm{OPT}_{\mathrm{LP}}(\theta)=\Omega(n)$, so the
relative gap vanishes along any sequence of instances satisfying those assumptions.  This is a conditional asymptotic statement, not a
claim that every large portfolio has a tight LP relaxation.

% ============================================================
% EXACT FULL-BREAKPOINT CERTIFICATION
% ============================================================
\section{Exact Full-Breakpoint Certification}\label{sec:exact}

HullRound supplies a robust-feasible decision and an additive fixed-$\theta$
certificate, but it need not solve the integer MCKP exactly.  When an exact
robust optimum or an anytime global gap is required, the same decomposition and
hull-greedy LP geometry can instead be used inside branch-and-bound.  The
distinction between LP geometry and integer search is essential throughout this
section.

\subsection{Fixed-\texorpdfstring{$\theta$}{theta} exact search}

Fix $\theta\in\mathcal B$ with $C^\theta\ge0$.  A branch-and-bound node fixes
options for a subset $F$ of the items and leaves $U:=\{1,\ldots,n\}\setminus F$
free.  If the fixed items use cost $C_F$ and contribute value $V_F$, the
remaining capacity is $C^\theta-C_F$.  The hull-greedy optimum of the residual
LP relaxation over $U$, plus $V_F$, is an upper bound on every integer
completion of the node.  Thus the fixed-$\theta$ LP routine from
\Cref{subsec:hull_theta} is also a structure-exploiting node-bound oracle.

Upper-hull filtering may be used to compute this LP bound, but it is not a
general integer-preprocessing rule.  An original option may be removed from
the exact search only if another option of the same item has weakly smaller
fixed-$\theta$ cost and weakly larger value, with at least one strict
inequality.  Non-dominated options below the upper hull remain searchable.

\begin{theorem}[Fixed-$\theta$ branch-and-bound exactness]\label{thm:fixed_theta_bnb}
Fix $\theta\in\mathcal B$.  Suppose branching covers every original option
remaining after the integer-safe within-item dominance rule, and suppose a node
is pruned only when it is infeasible or when its valid residual LP upper bound
is no larger than the incumbent value.  If every live node is eventually
fathomed, the returned incumbent is optimal for the fixed-$\theta$ MCKP.
\end{theorem}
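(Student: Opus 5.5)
The plan is the standard branch-and-bound invariant argument, carried out in three steps: show the dominance rule loses no optimum, show that coverage plus valid pruning keeps some optimal completion reachable, and conclude at termination.

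\emph{Step 1: the dominance rule is safe.} Suppose option $j'$ of item $i$ is dominated by option $j$, meaning $c^\theta_{ij}\le c^\theta_{ij'}$ and $v_{ij}\ge v_{ij'}$ with one strict inequality. Take any feasible selection that uses $j'$ and replace it by $j$.
\begin{itemize}
\item The total cost $\sum_i c^\theta_{i,j(i)}$ does not increase, so \eqref{eq:theta_knapsack_clean} still holds.
\item The value does not decrease.
\end{itemize}
Applying this replacement repeatedly, item by item, reaches a state with no dominated options after finitely many steps, because each item has a finite menu. Hence at least one optimal solution $\bm z^\star$ of the fixed-$\theta$ MCKP \eqref{eq:mckp_theta_clean} uses only undominated options. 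The problem is feasible: some node yields an incumbent, or, if the feasible set is empty, the claim is vacuous. Non-dominated options below the upper hull are kept, so $\bm z^\star$ remains representable in the search tree.

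\emph{Step 2: invariant.} Let $\mathrm{OPT}$ be the fixed-$\theta$ optimum and $L$ the current incumbent value. I will show that at every stage, either $L=\mathrm{OPT}$ or some live node's partial assignment extends to an optimal undominated solution.
\begin{itemize}
\item \emph{Initially:} the root fixes nothing, so it extends to $\bm z^\star$.
\item \emph{Branching:} covering all remaining undominated options of the branched item means that one child agrees with $\bm z^\star$ on that item. That child inherits the property.
\item \emph{Pruning for infeasibility:} this cannot remove the node containing $\bm z^\star$, since that node has a feasible completion.
\item \emph{Pruning by bound:} the node's bound is $V_F$ plus the residual hull-greedy LP value over $U$ with capacity $C^\theta-C_F$. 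This bound is valid: any integer completion is feasible for the residual LP, and by \Cref{thm:master_greedy} the greedy value equals the residual LP optimum. So $\mathrm{OPT}\le \text{bound}\le L$. Since $L$ is always the value of a feasible solution, $L\le\mathrm{OPT}$, and therefore $L=\mathrm{OPT}$ already.
\item \emph{Incumbent updates:} these only increase $L$ through feasible solutions, so $L\le\mathrm{OPT}$ is preserved.
\end{itemize}

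\emph{Step 3: conclusion.} At termination every live node has been fathomed, either by pruning or by becoming a leaf that fixes all $n$ items. A feasible leaf is compared with the incumbent. If the optimal path reached a leaf, then $L\ge\mathrm{OPT}$. Otherwise it was pruned, and Step 2 gives $L=\mathrm{OPT}$. In both cases the returned incumbent is optimal.

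The main obstacle is bound validity in Step 2. The node LP is over the residual free items with possibly reduced capacity, and it must not rely on hull-filtered integer options. If the residual capacity $C^\theta-C_F$ is negative and every free item has a zero-cost baseline option, the node is infeasible, and it must be treated as pruned-infeasible rather than given an LP value.
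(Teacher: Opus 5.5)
Your proposal is correct and follows essentially the same argument as the paper: dominance safety, branching coverage, soundness of infeasibility and bound pruning, and exhaustion at termination, which you organize as an explicit invariant ($L=\mathrm{OPT}$ or some live node extends to an undominated optimum). One small tightening: repeated dominance replacement terminates because strict dominance is acyclic, since each replacement strictly improves the pair $(c^\theta_{ij},-v_{ij})$ in the componentwise order, and not merely because the menu is finite.
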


\begin{proof}
The within-item dominance rule preserves at least one optimal completion,
because a removed option can be replaced without increasing cost or decreasing
value.  Branching partitions all remaining integer completions.  Infeasible
nodes contain no completion, while the residual LP bound dominates every
integer completion of a feasible node.  Consequently, a bound-pruned node
cannot improve the incumbent.  Once all nodes are fathomed, no unevaluated
integer completion can improve it.
\end{proof}

Every incumbent produced by the floating-point implementation is checked by
the original sorted-$\Gamma$ certificate in \eqref{eq:certify_alg}.  For the
feasibility decision, the binary64 input coefficients are converted to their
exact rational values and the certificate sign is tested without a feasibility
tolerance; LP feasibility, bound comparison, and objective comparison retain
their separately reported numerical tolerances.

\subsection{Global exactness and anytime gap}\label{subsec:global_exact}

Let $Z^{\mathrm{IP}}(\theta)$ denote the fixed-$\theta$ integer optimum.  By
\Cref{prop:theta_candidates_discrete_clean}, the robust optimum is
\[
Z^{\mathrm{rob}}=\max_{\theta\in\mathcal B}Z^{\mathrm{IP}}(\theta).
\]
The global algorithm initializes a valid record $U_\theta\ge
Z^{\mathrm{IP}}(\theta)$ for every original threshold before reporting a
finite gap.  A candidate with $C^\theta<0$ or an infeasible fixed-$\theta$ LP
receives $U_\theta=-\infty$; otherwise its root LP value is a valid initial
record.  As search proceeds, a record may be replaced by a proven optimum or
by the best remaining branch-and-bound upper bound.  A threshold is pruned only
when its valid record cannot improve the robust-feasible global incumbent
$Z^{\mathrm{inc}}$.

\begin{theorem}[Global full-breakpoint certificate]\label{thm:global_exact}
Construct $\mathcal B=\{0\}\cup\{|t_{ij}|\}$ from all original admissible
options.  If every unpruned fixed-$\theta$ search is completed under the
conditions of \Cref{thm:fixed_theta_bnb}, the best certified incumbent is a
globally optimal $\Gamma$-robust solution.  Under time or node limits, if every
$\theta\in\mathcal B$ has a valid upper-bound record, then
\[
 Z^{\mathrm{inc}}\ \le\ Z^{\mathrm{rob}}\ \le\
 U^{\mathrm{global}}:=\max_{\theta\in\mathcal B}U_\theta.
\]
Hence $U^{\mathrm{global}}-Z^{\mathrm{inc}}$ is a valid absolute anytime gap.
If even one original threshold lacks a valid record, no finite global gap is
certified.
\end{theorem}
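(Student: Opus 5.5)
The argument combines the finite-family representation $Z^{\mathrm{rob}}=\max_{\theta\in\mathcal B}Z^{\mathrm{IP}}(\theta)$ with \Cref{thm:fixed_theta_bnb} applied threshold by threshold. I would first make that representation precise in both directions. The first direction is $Z^{\mathrm{IP}}(\theta)\le Z^{\mathrm{rob}}$ for every $\theta\in\mathcal B$. Any selection feasible for the fixed-$\theta$ MCKP \eqref{eq:mckp_theta_clean} satisfies \eqref{eq:theta_margin_clean}, and hence \eqref{eq:theta_feas_clean} with that $\theta$. By \Cref{thm:parametric} and \Cref{thm:robust} it is therefore robust feasible. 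The second direction is $Z^{\mathrm{rob}}\le\max_{\theta\in\mathcal B}Z^{\mathrm{IP}}(\theta)$. This is \Cref{prop:theta_candidates_discrete_clean}: a robust optimum is feasibility-certified by some breakpoint in $\{0\}\cup\{|t_i(x_i^\star)|\}\subseteq\mathcal B$. The proof depends on $\mathcal B$ being built from all original admissible options; otherwise this inclusion can fail. If the robust problem is infeasible, every $Z^{\mathrm{IP}}(\theta)=-\infty$ and both statements hold trivially.

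For the exactness claim, I split $\mathcal B$ into completed thresholds and pruned thresholds. For a completed threshold, \Cref{thm:fixed_theta_bnb} shows the search found a fixed-$\theta$ optimum, and this optimum is robust feasible by the first direction. The global incumbent is the best such certified value, so it is at least $Z^{\mathrm{IP}}(\theta)$. A threshold is pruned only when its valid record satisfies $U_\theta\le Z^{\mathrm{inc}}$ at the time of pruning. Because the incumbent only increases, this gives $Z^{\mathrm{IP}}(\theta)\le U_\theta\le Z^{\mathrm{inc}}$ at termination as well. Taking the maximum over $\mathcal B$ yields $Z^{\mathrm{rob}}\le Z^{\mathrm{inc}}$. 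The reverse inequality holds because the incumbent passed the sorted-$\Gamma$ certificate \eqref{eq:certify_alg}, so it is robust feasible.

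For the anytime bound, the lower inequality $Z^{\mathrm{inc}}\le Z^{\mathrm{rob}}$ is again robust feasibility of the certified incumbent. For the upper inequality I use $Z^{\mathrm{rob}}=\max_\theta Z^{\mathrm{IP}}(\theta)\le\max_\theta U_\theta$, which requires $U_\theta\ge Z^{\mathrm{IP}}(\theta)$ for every record type. This is the step that needs the most care, so I would check each case in turn.
\begin{itemize}
\item \emph{Threshold with $C^\theta<0$ or an infeasible LP.} Here $U_\theta=-\infty$ is correct, since the integer problem is then infeasible.
\item \emph{Root LP value.} It dominates the integer optimum because the LP is a relaxation.
\item \emph{Proven optimum.} It equals $Z^{\mathrm{IP}}(\theta)$.
\item \emph{Best remaining branch-and-bound bound.} It must be taken as the maximum of the incumbent value for that $\theta$ and the bounds of all live nodes. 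Every integer completion lies either in an explored leaf, in a live node bounded by the residual LP plus $V_F$, or in a node pruned by infeasibility or by a bound not exceeding an incumbent. So this maximum is still $\ge Z^{\mathrm{IP}}(\theta)$.
\end{itemize}

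Finally, if some $\theta$ lacks a record, nothing constrains $Z^{\mathrm{IP}}(\theta)$ from above. The maximum over the recorded thresholds then bounds only a subfamily, and no finite global gap follows.
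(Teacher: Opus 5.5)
Your proposal is correct and follows the same route as the paper. It combines the identity $Z^{\mathrm{rob}}=\max_{\theta\in\mathcal B}Z^{\mathrm{IP}}(\theta)$, \Cref{thm:fixed_theta_bnb} on completed thresholds with record-based pruning on the rest, the sorted-$\Gamma$ certificate for $Z^{\mathrm{inc}}\le Z^{\mathrm{rob}}$, and $U_\theta\ge Z^{\mathrm{IP}}(\theta)$ for the upper bound, while spelling out both directions of the finite-family identity and the validity of each record type in more detail than the paper (one small precision point: if nodes within a threshold's search are pruned against the global incumbent rather than that threshold's own incumbent, the ``best remaining bound'' record should be the maximum of $Z^{\mathrm{inc}}$ and the live-node bounds, since otherwise it need not dominate $Z^{\mathrm{IP}}(\theta)$).
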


\begin{proof}
The finite-breakpoint reduction covers every robust-feasible discrete
selection.  Completed fixed-$\theta$ searches are exact by
\Cref{thm:fixed_theta_bnb}, and a threshold pruned by a valid LP record cannot
improve the incumbent.  Under limits, the incumbent is a lower bound because
it passes the original robust certificate.  Each $U_\theta$ bounds its
fixed-threshold integer optimum, so their maximum bounds the maximum over the
complete threshold family.  The last conclusion follows because an unrecorded
threshold may contain an unbounded, relative to the recorded family, improving
solution.
\end{proof}

\subsection{Guarded parametric sweep}\label{subsec:sweep}

The guarded implementation traverses the same sorted full-breakpoint set and
tracks $s^\theta_{ij}$ through its piecewise-affine identity.  At each
breakpoint it independently reconstructs the authoritative transformed data,
including the exact capacity representation used by the certifying solver.
An item hull is reused only when its complete transformed
cost--value point multiset is unchanged; otherwise it is rebuilt from the
original admissible options.

\begin{proposition}[Validity of the guarded full-breakpoint sweep]\label{prop:sweep_exact}
In exact arithmetic, a sweep that visits the complete set $\mathcal B$,
constructs the same transformed fixed-$\theta$ data as independent
recomputation, and applies the guarded hull-reuse rule produces fixed-threshold
LP bounds and exact-search certificates with the same validity status as
independent enumeration.
\end{proposition}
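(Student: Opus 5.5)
The plan is to reduce the statement to a per-threshold identity of inputs. The fixed-threshold LP bound at any node, and every exact-search certificate, are deterministic functions of the transformed data $(c^\theta_{ij},v_{ij},C^\theta)$ together with the per-item upper hulls $\mathcal H_i^\theta$. So if at every $\theta\in\mathcal B$ the sweep presents the downstream routines with exactly the same data and hulls as independent recomputation, then each downstream output is identical. In particular, the validity guarantees of \Cref{thm:master_greedy}, \Cref{thm:fixed_theta_bnb} and \Cref{thm:global_exact} transfer verbatim.

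\textbf{Step 1: coverage.} The sweep visits the complete set $\mathcal B$ by hypothesis. Hence the family of thresholds for which records $U_\theta$ are produced coincides with that of independent enumeration. The finite-breakpoint reduction of \Cref{prop:theta_candidates_discrete_clean} then applies unchanged, and no threshold lacks a record merely because of the sweep.

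\textbf{Step 2: transformed data.} By hypothesis the reconstructed $s^\theta_{ij}$, baseline choice, $c^\theta_{ij}$ and $C^\theta$ equal those of independent recomputation. For completeness I would also note why the tracked values are consistent. The piecewise-affine identity following \eqref{eq:s_theta_def_clean} shows that $s^\theta_{ij}$ is affine in $\theta$ between consecutive breakpoints. In exact arithmetic, evaluating it at a breakpoint therefore reproduces \eqref{eq:s_theta_def_clean} exactly. The authoritative reconstruction makes this redundant but harmless.

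\textbf{Step 3: hulls.} This is the main obstacle, because reuse is the only point where the sweep's state can differ from a fresh computation. The argument is that the upper hull of $\operatorname{conv}(\mathcal P_i^\theta)$, after the merge and dominance preprocessing, is a function only of the point multiset $\mathcal P_i^\theta$. It does not depend on $\theta$ or on any labeling of the points.

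There is a subtlety: reused hull vertices must also map back to the same original options. I would handle this by requiring the guard to compare the multiset of labeled points $(j,c^\theta_{ij},v_{ij})$. Alternatively, one can observe that any tie among equal points is resolved identically by the deterministic preprocessing.

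Under this guard, a reused hull equals the hull that a rebuild would produce, and a rebuilt hull is by definition the fresh one. Hence $\mathcal H_i^\theta$, the slopes $\rho^{H,\theta}_{i,k}$ and the lengths $\Delta c^{H,\theta}_{i,k}$ all coincide with independent enumeration.

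\textbf{Step 4: conclusion.} With identical inputs, the hull-greedy root and node bounds are identical and valid by \Cref{thm:master_greedy}. The branch-and-bound search then follows the same course and, when completed, is exact by \Cref{thm:fixed_theta_bnb}. The records $U_\theta$ are therefore identical, and \Cref{thm:global_exact} gives the same global or anytime gap status. The final certificate \eqref{eq:certify_alg} is computed from the original data and does not depend on the sweep at all.
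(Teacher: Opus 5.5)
Your proposal is correct and follows essentially the paper's argument: both procedures visit the same finite set $\mathcal B$, the piecewise-affine update coincides algebraically with \eqref{eq:s_theta_def_clean}, an unchanged point multiset yields the same hull while a changed one is rebuilt, and hence the arguments of \Cref{thm:fixed_theta_bnb,thm:global_exact} carry over. The one place you diverge is unnecessary. You need not strengthen the guard to labeled points, and your tie-resolution alternative would not in fact cover a relabeling among equal-value options. As the paper observes, exact branch-and-bound branches over the integer-safe original options and uses the hull only as an LP-bound object, whose value depends only on the unlabeled cost--value multiset.
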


\begin{proof}
Both procedures visit the same finite candidates.  The piecewise-affine update
is algebraically identical to \eqref{eq:s_theta_def_clean}.  Reusing an
unchanged point set preserves its hull; rebuilding a changed point set recovers
the independent hull.  Exact branch-and-bound continues to branch over the
integer-safe original options and uses the hull only as an LP-bound object, so
the arguments of \Cref{thm:fixed_theta_bnb,thm:global_exact} are unchanged.
\end{proof}

This proposition is a correctness statement, not a worst-case runtime
dominance claim.  The implementation preserves every binary64-distinct
original deviation plus zero, validates sampled swept states against
independent reconstruction, and fails closed rather than tolerance-clustering
threshold candidates.

% ============================================================
% SECTION: NUMERICAL RESULTS
% ============================================================
% Generated by scripts/run_paper_a_release.py; do not edit.
\newcommand{\AuditGlobalOracleCases}{1000}
\newcommand{\AuditBreakpointChecks}{62039}
\newcommand{\AuditLPChecks}{664}
\newcommand{\ExactComparisonRows}{45}
\newcommand{\MaxSolverDifference}{1.41e-09}
\newcommand{\MedianHullRoundGapPct}{0.00787\%}
\newcommand{\MaxHullRoundGapPct}{0.108\%}
\newcommand{\CertificateRows}{45}
\newcommand{\MaxLossBoundRatio}{0.535}
\newcommand{\MaxLPDifference}{2.33e-10}
\newcommand{\ScaleRows}{24}
\newcommand{\LargestN}{200}
\newcommand{\LargestM}{16}
\newcommand{\MedianHullFractionPct}{100\%}

\section{Numerical Results}\label{sec:numerics}

The computational study has three separate purposes: to challenge
the implementation against independent oracles, to evaluate the one-item
rounding certificate on controlled pricing instances, and to describe observed
runtime over the complete original breakpoint family.

\subsection{Protocol and reproducibility}

All results in this section come from one immutable dated evidence directory
generated by the repository's mathematical-audit and Paper-A-release drivers.
The directory records the git
commit, software environment, seeds, complete CSV rows, generated manuscript
macros, and SHA-256 manifest.  Candidate construction uses zero and every
binary64-distinct original deviation; no tolerance clustering is permitted.
Branch-and-bound uses a numerical comparison tolerance of $10^{-9}$, while
every reported incumbent is checked independently by directly sorting its
selected deviations and subtracting the largest $\Gamma$ values.  Thus the
exactness theorems remain exact-arithmetic statements and the experiment is a
binary64 implementation audit.

The independent mathematical audit contains \AuditGlobalOracleCases\ random
small instances split evenly between integer and widely scaled floating-point
coefficients.  For each instance, the full-breakpoint branch-and-bound result
is compared with exhaustive enumeration.  The audit also performs
\AuditBreakpointChecks\ selection-level checks of the finite-breakpoint
identity and \AuditLPChecks\ comparisons of the hull-greedy relaxation with an
independently formulated HiGHS LP.  It includes the permanent regression with
$100$ groups, $\Gamma=1$, and two distinct deviations $1$ and
$1+9\times10^{-11}$; deleting the latter breakpoint makes the constructed
instance infeasible.  All audit gates pass in the released report.

For the controlled pricing experiments, reference prices, exposures, demand
scales, elasticities, and menu perturbations are sampled independently from the
fully specified generator shipped with the repository.  Demand is decreasing
and piecewise linear on each finite menu, uncertainty is $10\%$ of nominal
demand, the fairness band is $\pm10\%$, and the margin target is calibrated so
the baseline portfolio retains $2\%$ nominal slack.

\subsection{Exact comparisons and rounding certificates}

We first compare the full-breakpoint branch-and-bound implementation with an
independent HiGHS mixed-integer formulation on \ExactComparisonRows\ instances:
five seeds, $n\in\{12,20,30\}$, $m=8$, and the distinct budgets among
$\{0,\lfloor\sqrt n\rfloor,\lfloor0.1n\rfloor\}$.  Both methods certify every
row, and their largest absolute objective difference is
\MaxSolverDifference.  HullRound's median and maximum relative gaps to those
certified optima are \MedianHullRoundGapPct\ and \MaxHullRoundGapPct,
respectively.

The certificate study uses \CertificateRows\ instances from five independent
seeds, $n\in\{30,60,120\}$, $m=10$, and the same three budget regimes.  Every
returned decision passes the direct sorted-$\Gamma$ robust check.  Every
round-down loss satisfies
$L_{\mathrm{rd}}\le\Delta V_{\max}^{\theta}$, with maximum observed ratio
\MaxLossBoundRatio.  At the selected thresholds, the largest absolute
difference between the hull-greedy LP value and the independent HiGHS LP value
is \MaxLPDifference.

\subsection{Controlled scalability observations}

The timing study contains \ScaleRows\ independently generated instances with
$n\in\{30,60,120,200\}$, $m\in\{8,16\}$, three seeds, and
$\Gamma=\lfloor\sqrt n\rfloor$.  It evaluates the complete original
breakpoint family and directly certifies every returned solution.  The largest
reported configuration is $n=\LargestN$, $m=\LargestM$.  The median fraction of
raw menu points retained as hull vertices is \MedianHullFractionPct, showing
that compression is instance dependent and can be weak for these economically
structured menus.  Wall-clock measurements are single-process observations on
the recorded machine and are intended only to indicate the scale of the public
implementation.

\begin{figure}[t]
\centering
\includegraphics[width=\textwidth]{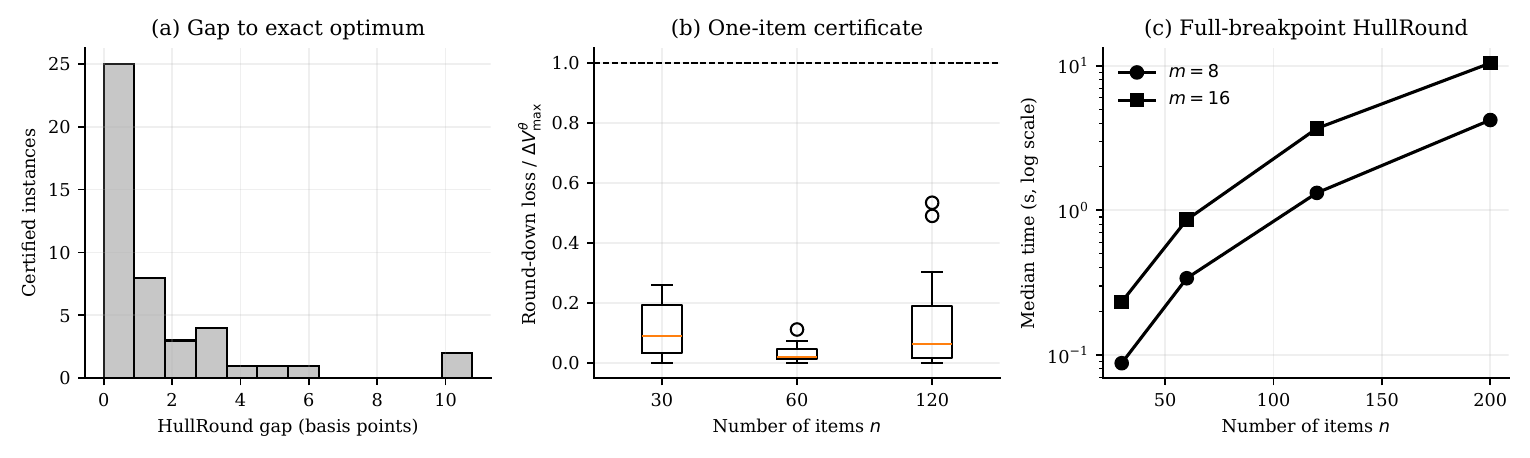}
\caption{Controlled evidence release. (a) HullRound gap to the independently
certified exact optimum. (b) Realized round-down loss divided by the one-item
bound; the dashed line is the theoretical ceiling. (c) Median wall-clock time
for complete-breakpoint HullRound.}
\label{fig:release_evidence}
\end{figure}

\paragraph{Limitations.}
The study uses controlled separable own-price demand rather than observed
transactions, does not model substitution or correlation, and is intentionally
bounded at moderate portfolio sizes so that others can reproduce it.  It
supports implementation correctness and illustrates the fixed-threshold
certificate.  It does not establish empirical superiority over general-purpose
solvers, estimate causal demand, or validate any particular commercial pricing
policy.

% CONCLUSION
% ============================================================
\section{Conclusion}\label{sec:conclusion}

We have studied discrete pricing under demand uncertainty through the lens of robust
optimization, reducing the problem to a multiple-choice knapsack problem with a
$\Gamma$-budget uncertainty set in its ratio-margin constraint.  The paper's
contribution is the certifying chain as a whole: the exact pricing-to-MCKP
transformation, the specialization to the complete original threshold family,
a robust-feasible one-item rounding certificate at each fixed threshold, and a
full-family branch-and-bound layer with valid global-gap accounting.  The
finite-family representation and MCKP hull geometry are classical ingredients;
their use in isolation is not claimed as novel.

The main guarantees are precise.  Round-down loses at most one
adjacent hull-value jump for a fixed threshold.  A relative $O(1/n)$ statement
requires uniformly bounded jumps and linear objective growth.  Global
optimality requires complete fixed-threshold search, while a finite anytime gap
requires a valid upper-bound record for every original threshold.  In the
binary64 implementation, every distinct original deviation is retained and
each incumbent is checked using the original sorted-$\Gamma$ expression.

The dated controlled evidence release passes exhaustive, independent-LP, and
independent-MILP comparisons and illustrates the certificate and computational
scale on synthetic pricing menus.

This endpoint also fixes the boundary with the subsequent paper.  The present
paper exposes and certifies the complete family of fixed-threshold MCKPs.  It
does not construct one bound that acts on many threshold intervals
simultaneously.  The subsequent paper starts from the family established here
and develops that separate simultaneous-envelope question, with different
theorems, algorithms, and evidence.  Extensions to correlated or substitutional
demand and validation on observed transaction data remain separate research
directions.

\section*{Data and Code Availability}
The solver, tests, immutable evidence release, and scripts used to generate
every reported numerical artifact are publicly available on GitHub:
\ifblind
\begin{center}
\emph{Anonymous artifact link supplied with the review submission.}
\end{center}
\else
\begin{center}
\url{https://github.com/eric939/certifying-robust-pricing-mckp}
\end{center}

\paragraph{Acknowledgments.} The author thanks Prof. Dr. Angela Kunoth (Department of Mathematics, University of Cologne) for her guidance and support. The problem was posed by Dr. Christian Mollet (Hannoversche Versicherung, Hannover, Germany), and the author also thanks him and his colleagues for several helpful discussions.
\fi

\paragraph{Generative-AI disclosure.} OpenAI ChatGPT and Codex assisted with language editing, code generation, and computational and mathematical auditing. The author independently reviewed and verified the manuscript text, proofs, code, and reported results and assumes full responsibility for the work.

% ---------- References ----------

% ---------- Appendix ----------
\appendix
\section{Knapsack Problem}\label{app:background}

\paragraph{Classical 0--1 Knapsack.}
Given items $i=1,\dots,n$ with value $v_i\ge 0$ and cost $c_i\ge 0$, and a capacity $C$, the 0--1 knapsack problem is
\begin{align}
\max \quad & \sum_{i=1}^n v_i\,y_i \\
\text{s.t.}\quad & \sum_{i=1}^n c_i\,y_i \le C, \\
& y_i \in\{0,1\}\quad (i=1,\dots,n).
\end{align}

\paragraph{Multiple-Choice Knapsack (MCKP).}
Early formulations and algorithms include \citet{Nauss1978}, \citet{Ibaraki1978}, and \citet{Sinha1979}. Items are partitioned into $n$ groups; from each group exactly one option must be chosen.
Let group $i$ have discrete options (price levels) indexed by $j$, with value $v_{ij}$ and cost $c_{ij}$. With capacity $C$, the MCKP is
\begin{align}
    \max \quad & \sum_{i=1}^n \sum_j v_{ij}\,z_{ij} \label{eq:mckp_obj}\\
    \text{s.t.}\quad & \sum_{i=1}^n \sum_j c_{ij}\,z_{ij} \le C, \label{eq:mckp_cap}\\
    & \sum_j z_{ij}=1 \quad (i=1,\dots,n), \label{eq:mckp_choice}\\
    & z_{ij}\in\{0,1\}\quad (\forall i,j). \label{eq:mckp_bin}
\end{align}

\section{Proof of the Main Structural Theorem, Steps 1--3}\label{app:thm2_steps}
We provide the first three steps of the proof of \Cref{thm:master_greedy} for completeness. These arguments are standard; we include them because our notation differs from prior sources.

\textbf{Step 1: Convex-hull reformulation.}
Fix an item $i$. For any feasible vector $(z_{ij})_{j=1}^m$ with $\sum_j z_{ij}=1$ and $z_{ij}\ge 0$, define
\[
(c_i,v_i):=\Big(\sum_{j=1}^m c_{ij}z_{ij},\ \sum_{j=1}^m v_{ij}z_{ij}\Big).
\]
Then $(c_i,v_i)\in \operatorname{conv}(\mathcal P_i)$ by definition of convex hull, since it is a convex combination of points in $\mathcal P_i$.
Conversely, every point in $\operatorname{conv}(\mathcal P_i)$ admits such a representation as a convex combination of points in $\mathcal P_i$.
Therefore, \eqref{eq:lp_mckp_master} is equivalent to choosing $(c_i,v_i)\in \operatorname{conv}(\mathcal P_i)$ for all $i$ to maximize
$\sum_i v_i$ subject to $\sum_i c_i\le C$.

\textbf{Step 2: Restriction to the upper hull (proof of (i)).}
Fix $i$ and a feasible solution with induced $(c_i,v_i)\in\operatorname{conv}(\mathcal P_i)$.
Consider the set
\[
U_i(c):=\sup\{v : (c,v)\in\operatorname{conv}(\mathcal P_i)\}.
\]
Since $\operatorname{conv}(\mathcal P_i)$ is a compact polytope, the supremum is attained and $U_i(c)$ describes the upper hull
of $\operatorname{conv}(\mathcal P_i)$.
Let $v_i':=U_i(c_i)$. Then $(c_i,v_i')\in\operatorname{conv}(\mathcal P_i)$, and $v_i'\ge v_i$ by construction.
Replacing $(c_i,v_i)$ by $(c_i,v_i')$ for each $i$ preserves feasibility of $\sum_i c_i\le C$ and weakly increases the objective.
Hence there exists an optimal solution with each $(c_i,v_i)$ on the upper hull.

The upper hull of a polytope in $\mathbb R^2$ is a polyline connecting consecutive vertices of $\mathcal H_i$.
Thus, if $(c_i,v_i)$ lies on the hull and $c_i\in[c^H_{i,k},c^H_{i,k+1}]$ for some $k$, then
$(c_i,v_i)$ is a convex combination of the two adjacent vertices
$(c^H_{i,k},v^H_{i,k})$ and $(c^H_{i,k+1},v^H_{i,k+1})$. This proves (i).

\textbf{Step 3: Segment-length parameterization (proof of (ii)).}
Fix an item $i$ and let the upper-hull vertices of $\operatorname{conv}(\mathcal P_i)$ be
\[
\mathcal H_i=\{(c^H_{i,1},v^H_{i,1}),\dots,(c^H_{i,p_i},v^H_{i,p_i})\},
\qquad c^H_{i,1}<\cdots<c^H_{i,p_i}.
\]
For $k=1,\dots,p_i-1$ define
\[
\Delta c^H_{i,k}:=c^H_{i,k+1}-c^H_{i,k}>0,
\qquad
\rho^H_{i,k}:=\frac{v^H_{i,k+1}-v^H_{i,k}}{\Delta c^H_{i,k}}.
\]
Every point $(c_i,v_i)$ on the upper hull lies on a unique segment between adjacent vertices, so there exist
$k^\star\in\{1,\dots,p_i-1\}$ and $\alpha\in[0,1]$ such that
\[
(c_i,v_i)=(1-\alpha)(c^H_{i,k^\star},v^H_{i,k^\star})+\alpha(c^H_{i,k^\star+1},v^H_{i,k^\star+1}).
\]
Define segment-length variables $\ell_{i,k}$ by
\[
\ell_{i,k}:=
\begin{cases}
\Delta c^H_{i,k}, & k<k^\star,\\
\alpha\,\Delta c^H_{i,k^\star}, & k=k^\star,\\
0, & k>k^\star,
\end{cases}
\qquad\text{so that}\qquad 0\le \ell_{i,k}\le \Delta c^H_{i,k}.
\]
Then the identities
\[
c_i=c^H_{i,1}+\sum_{k=1}^{p_i-1}\ell_{i,k},
\qquad
v_i=v^H_{i,1}+\sum_{k=1}^{p_i-1}\rho^H_{i,k}\,\ell_{i,k}
\]
hold, since on each segment $k$ the increase in $v$ equals $\rho^H_{i,k}$ times the increase in $c$.
Summing these expressions over $i$ and substituting into the convex-hull formulation from Step~1 yields
\[
\max \sum_{i=1}^n \Big(v^H_{i,1}+\sum_{k=1}^{p_i-1}\rho^H_{i,k}\ell_{i,k}\Big)
\quad \text{s.t.}\quad
\sum_{i=1}^n \Big(c^H_{i,1}+\sum_{k=1}^{p_i-1}\ell_{i,k}\Big)\le C,\quad
0\le \ell_{i,k}\le \Delta c^H_{i,k}.
\]
Moving $\sum_i c^H_{i,1}$ to the right-hand side gives \eqref{eq:cont_knap_master}.  The displayed prefix construction maps every hull
solution into that bounded continuous knapsack.  Conversely, concavity makes the slopes within an item nonincreasing, so a density-ordered
optimal solution of \eqref{eq:cont_knap_master} saturates segment $k$ before using segment $k+1$ (ties may be reordered this way without
changing value).  Such an optimum has the required prefix form and maps back to the hull.  Therefore the two problems have the same optimal
value, proving (ii).

\end{document}